\newtheorem{proposition}{Proposition}[section]
\newtheorem{lemma}[proposition]{Lemma}
\newtheorem{corollary}[proposition]{Corollary}
\newtheorem{theorem}[proposition]{Theorem}
\newtheorem{problem}{Problem}[]
\newtheorem{question}{Question}[]
\theoremstyle{definition}
\newtheorem{example}[proposition]{Example}
\newtheorem{remark}[proposition]{Remark}
\newcommand{\thlabel}[1]{\label{th:#1}}
\newcommand{\thref}[1]{Theorem~\ref{th:#1}}
\newcommand{\selabel}[1]{\label{se:#1}}
\newcommand{\seref}[1]{Section~\ref{se:#1}}
\newcommand{\lelabel}[1]{\label{le:#1}}
\newcommand{\leref}[1]{Lemma~\ref{le:#1}}
\newcommand{\prlabel}[1]{\label{pr:#1}}
\newcommand{\prref}[1]{Proposition~\ref{pr:#1}}
\newcommand{\colabel}[1]{\label{co:#1}}
\newcommand{\coref}[1]{Corollary~\ref{co:#1}}
\newcommand{\relabel}[1]{\label{re:#1}}
\newcommand{\reref}[1]{Remark~\ref{re:#1}}
\newcommand{\eqlabel}[1]{\label{eq:#1}}
\newcommand{\equref}[1]{(\ref{eq:#1})}
\newcommand{\mc}{\mathcal}
\newcommand{\ve}{\varepsilon}
\newcommand{\ho}{{\rm Hom}}
\newcommand{\sq}{\square}
\newcommand{\alg}{{\rm Alg}}
\newcommand{\coalg}{{\rm CoAlg}}
\newcommand{\bialg}{{\rm BiAlg}}
\newcommand{\halg}{{\rm HopfAlg}}
\newcommand{\shalg}{{\rm SHopfAlg}}
\newcommand{\analog}{\rule{1cm}{0.01mm}}
\begin{document}

\title{On epimorphisms and monomorphisms of Hopf algebras}
\author{Alexandru Chirv\u asitu}

\address{
University of California, Berkeley, 970 Evans Hall \#3480, Berkeley, CA, 94720-3840 USA
}
\email{chirvasitua@gmail.com}

\subjclass[2000]{16W30, 18A20, 18A30, 18A40}
\keywords{Hopf algebra, epimorphism, monomorphism, faithfully flat, first Kaplansky conjecture}

\begin{abstract}

We provide examples of non-surjective epimorphisms $H\to K$ in the category of Hopf algebras over a field, even with the additional requirement that $K$ have bijective antipode, by showing that the universal map from a Hopf algebra to its enveloping Hopf algebra with bijective antipode is an epimorphism in $\halg$, although it is known that it need not be surjective. Dual results are obtained for the problem of whether monomorphisms in the category of Hopf algebras are necessarily injective. We also notice that these are automatically examples of non-faithfully flat and respectively non-faithfully coflat maps of Hopf algebras. 

\end{abstract}

\maketitle

\section*{Introduction}\selabel{0}

In this paper, we are concerned primarily with the problem of whether epimorphisms in the category $\halg$ of Hopf algebras over a field $k$ are surjective, and the dual question of whether monomorphisms are injective. This makes sense in any concrete category; in \cite{Re}, for example, the corresponding problem (on epimorphisms) is solved for some familiar categories, such as groups, Lie algebras, $C^*$ and von Neumann algebras, compact groups, locally compact groups, etc. To our knowledge, the problem has not been treated in the literature in the context of Hopf algebras. 

Aside from being interesting and natural in their own right, the two questions do play a part in certain technical results on Hopf algebras. In \cite{AD}, for example, a paper concerned with exact sequences of Hopf algebras, these problems arise naturally several times. In the dual pair \cite[Lemmas 1.1.6, 1.1.10]{AD} it is shown that certain conditions on a morphism of Hopf algebras are implied by injectivity, and imply that the morphism in question is a monomorphism in $\halg$ (and similarly for surjectivity). Also, in a remark after \cite[Prop. 1.2.3]{AD}, the authors observe that in a diagram of the form 
\[
\begin{diagram}
0          &\rTo              &\bullet           &\rTo                &\bullet              &\rTo            &\bullet            &\rTo        &0\\
           &                  &\dTo<{{\rm id}}   &                    &\dTo<{\theta}        &                &\dTo<{{\rm id}}    &            & \\
0          &\rTo              &\bullet           &\rTo                &\bullet              &\rTo            &\bullet            &\rTo        &0
\end{diagram}
\]   
where the rows are what in that paper are called exact sequences of Hopf algebras (\cite [Prop. 1.2.3]{AD}), $\theta$ is both a monomorphism and an epimorphism of Hopf algebras. The authors then mention as unknown whether in this case it follows that $\theta$ is an isomorphism, or, in general, whether epimorphisms (monomorphisms) of Hopf algebras are surjective (injective). In other words, this is a direct reference to our problem. It is, however, the only such reference we could find in the literature. 

A much more well-documented problem, on the other hand, is the one known as Kaplansky's first conjecture. Strictly speaking, the conjecture/problem has undergone several transformations since its appearance in \cite{Ka}. It initially asked whether all Hopf algebras are (left and right) free modules over their Hopf subalgebras. At the time, this was already known to be false: Oberst and Schneider had constructed a counterexample in \cite{OSch}. 

There are several positive results on the problem: it holds for instance if the coradical of the large Hopf algebra is contained in the small one by a result of Nichols (this also follows from \cite[Cor. 2.3]{Ra2}), or if the large algebra is pointed (\cite{Ra1}), or in the finite dimensional case by the now famous Nichols-Zoeller theorem (\cite[Theorem 3.1.5]{Mo}). 

In view of the general negative answer, it makes sense to weaken the requirements: \cite[Question 3.5.4]{Mo} asks whether Hopf algebras are always (left and right) faithfully flat over their Hopf subalgebras. Again, this holds in various particular cases (commutative, or cocommutative, or even when the large algebra has cocommutative coradical; we give some references below, in \seref{2}, after \prref{halg>coalg}). 

In the commutative case, the problem of faithful flatness arose in the theory of affine algebraic groups, for which we refer to \cite{DG, Wa}. Indeed, faithful flatness for commutative Hopf algebras (\cite[Th. 3.1]{Ta3}) is crucial in Takeuchi's purely algebraic proof in \cite{Ta3} of the one-to-one correspondence between normal closed subgroup schemes and quotient affine group schemes of an affine group scheme. See \cite[Th. 5.2]{Ta3}, and also \cite[Chapters 13-16]{Wa} for an exposition of these results.     

Despite all of these positive partial results, in general, Hopf algebras are {\it not} faithfully flat over Hopf subalgebras (\cite[Remark 2.6, Cor. 2.8]{Sc}). At the end of \cite[$\S$2]{Sc}, Schauenburg asks what we refer to from now on as being the current version of Kaplansky's question (or problem): 

Are Hopf algebras with bijective antipode (left and right) faithfully flat over Hopf subalgebras with bijective antipode? 

Our interest in the question of faithful (co)flatness for Hopf algebras stems from the fact that there are strong connections between it and the problem of whether epimorphisms are surjective. These are understood by first noticing that epimorphisms of Hopf algebras can already be recognized at the level of algebras (\prref{halg>alg}) through an adjunction, and then that a faithfully flat epimorphism of algebras is an isomorphism (a well-known result, which we prove however, for the sake of completeness, in \prref{ff}). 

It follows that whenever we have non-surjective epimorphisms, we automatically have counterexamples to Kaplansky's question. In particular, our counterexamples to epi $\Rightarrow$ surjective in \seref{2} and \seref{3} recover those in \cite{Sc} for Kaplansky's problem, from this new point of view. On the other hand, it follows that epimorphisms {\it are} surjective when the conjecture holds (as mentioned above, for commutative or cocommutative, or pointed Hopf algebras, for instance). In the commutative case, for example, the fact that epi implies surjecivity can be translated into geometric language as follows (see \cite[Th. 5.2, (i)]{Ta3}; we are using the same notations as Takeuchi): 

A morphism ${\rm Sp}(H)\to{\rm Sp}(K)$ of affine groups is a monomorphism if and only if the corresponding Hopf algebra map $K\to H$ is surjective. 

Indeed, the category of commutative Hopf algebras is the opposite of that of affine groups, so a monomorphism in the latter is the same as an epimorphism in the former.

The paper is organized as follows:

In \seref{1} we introduce the notations and conventions to be used throughout. We also very briefly recall two characterizations of monomorphisms of coalgebras. 

\seref{2} is devoted to the questions asked above, in  precisely that form. They are quickly settled in the negative by the simple observation that the antipode of a Hopf algebra $H$, regarded as a Hopf algebra map from $H$ to $H^{op,cop}$ ($H$ with the opposite multiplication and coopposite comultiplication) is both a monomorphism and an epimorphism in $\halg$. We also need the facts, known for some time, that there are Hopf algebras with non-surjective (\cite{Ni}) or non-injective (\cite{Ta2, Sc}) antipode. 

In this same section, we highlight the interactions between the Kaplansky conjecture and the problem of whether epimorphisms in $\halg$ (the category of Hopf algebras) are surjective, as discussed above. We also look briefly at the dual situation: the problem of whether surjective Hopf algebra maps are faithfully coflat is linked to that of whether monomorphisms of Hopf algebras are injective through \prref{halg>coalg} and \prref{ff'}. 

Finally, as an interesting consequence of this discussion, we show in \prref{Scorad} that the antipode of a Hopf algebra is surjective whenever its image contains the coradical. 

In \seref{3} we modify our question by imposing stronger hypotheses (akin to what is done in \cite{Sc} for the Kaplansky problem): we ask whether an epimorphic inclusion of Hopf algebras must be surjective if the larger Hopf algebra has bijective antipode, as well as the dual question. Again, we prove that there are counterexamples (\coref{counterex}). These are obtained through two adjunctions between the categories of Hopf algebras and of Hopf algebras with bijective antipode. One is the adjunction constructed in \cite{Sc}, where it is shown that there is a free Hopf algebra with bijective antipode (denoted here by $K^*(H)$) on every hopf algebra $H$. We prove that the universal map $H\to K^*(H)$ is {\it always} an epimorphism of Hopf algebras, thus finding our counterexamples whenever it is not surjective (and this does occur). 

The other adjunction we use is the ``dual'' of the previous one: we prove that there is a cofree Hopf algebra $K_*(H)$ with bijective antipode on every Hopf algebra $H$, and that the universal map $K_*(H)\to H$ is always a monomorphism of Hopf algebras. Again, this provides us with counterexamples to mono $\Rightarrow$ injective whenever such a universal map is not injective. 

Because we find the analogy interesting, we carry out a parallel discussion for two adjunctions between the categories of bialgebras and Hopf algebras: there exist both a free and a cofree Hopf algebra on a bialgebra $B$ (the former follows from \cite{Ta1} and is constructed explicitly in \cite{Pa}; the existence of the latter is proven in \cite{Ag1}, and we construct it here). We denote these by $H^*(B)$ and $H_*(B)$ respectively. As before, we show that the unit of the first adjunction provides us with epimorphisms $B\to H^*(B)$ of bialgebras, and the counit of the other adjunction gives us monomorphisms $H_*(B)\to B$ of bialgebras. See \thref{adjepimono}. 

In \seref{4} we finish with some problems for the reader. 

First, there are the questions parallel to Kaplansky's conjecture in its current form and its dual: we would like to know whether epimorphisms (monomorphisms) of Hopf algebras are surjective (injective) when all Hopf algebras in question have bijective antipode. 

Secondly, we ask for necessary and sufficient conditions on a bialgebra in order that it be a quotient or a subbialgebra of a Hopf algebra, and also for necessary and sufficient conditions on a Hopf algebra in order that it be a quotient of one with bijective antipode. These are motivated by the result (which is an immediate consequence of \cite[Prop. 2.7]{Sc}) that a Hopf algebra $H$ is a Hopf subalgebra of one with bijective antipode iff its antipode $S_H$ is injective.   

\section{Preliminaries}\selabel{1}

Throughout this paper, $k$ will be an arbitrary field. Unless explicitly specified otherwise, homomorphisms, tensor products, algebras, coalgebras, and so on are over $k$. We work with several categories: $\alg$, $\coalg$, $\bialg$ and $\halg$ denote the categories of $k$-algebras, coalgebras, bialgebras and Hopf algebras, respectively. $\shalg$ stands for the category of Hopf algebras with a bijective antipode (the $S$ in front is supposed to remind the reader of the usual notation $S$ for the antipode of a Hopf algebra). If $x,y$ are objects in a category $\mc C$, we use the notation $\mc C(x,y)$ for the set of morphisms from $x$ to $y$ in $\mc C$. 

We use standard notations for opposite and coopposite structures: $A^{op}$ is the opposite of the algebra $A$, and $C^{cop}$ is the coopposite of the coalgebra $C$. 

For an algebra $A,\ _A\mc M$ denotes the category of left $A$-modules, and similarly, $\mc M_A$ is the category of right $A$-modules. For a coalgebra $C,\ ^C\mc M$ and $\mc M^C$ are the categories of left and, respectively, right $C$-comodules. 

For basic notions of category theory such as limits, colimits, adjunctions, comma categories and so on, we refer mainly to \cite{MacL}, but what we need can be found in most sources. Another example is \cite[Appendix]{Pa}. We use the language and notations in \cite{MacL}. At some point we do make use of the notion of locally presentable category, but only in passing. Everything we need on the subject can be found for instance in \cite[Chapter 1]{ARo}.  

For the structure maps of our objects we reserve the usual notation: $\eta,\Delta,\ve,S,\bar S$ denote, respectively, the unit, comultiplication, counit, antipode, and skew antipode of an appropriate object (algebra, Hopf algebra, etc.). We sometimes use subscripts to indicate the object in question: $S_H$ is the antipode of the Hopf algebra $H$, for instance. For a coalgebra $C$ and an algebra $A$, we regard $\ho(C,A)$ as an algebra in the usual way, under the convolution $*$; in Sweedler sigma notation (\cite[1.4.2]{Mo}; we have omitted the summation symbol), we have:
\[
(f*g)(c)=f(c_{(1)})g(c_{(2)}).
\]   
Recall that when $H$ is a Hopf algebra with antipode $S,\ A$ is an algebra, and $f\in\alg(H,A)$, the composition $fS$ is the inverse of $f$ with respect to the convolution operation $*$. Similarly, $Sf$ is the inverse of $f\in\coalg(C,H)$ for a coalgebra $C$ (\cite[Chapter IV, Lemma 4.0.3]{Sw}).  

We also require the notion of {\it faithful coflatness} over a coalgebra. The main definitions and properties regarding (faithful) coflatness can be found in \cite[Chapters 21]{BW}. Here, the notion replacing the tensor product is that of cotensor product over a coalgebra, for which we refer to \cite[Appendix 2]{Ta4} or \cite[Chapters 21,22]{BW}. 

We recall here a result on monomorphisms in $\coalg$. For a proof (of our lemma and the converses to its two statements), the reader can consult for example \cite{NT}, where quite a few characterizations of monomorphisms of coalgebras can be found; for even more such characterizations see \cite[T. 2.1]{Ag2}. As is customary in the literature, we denote by $\sq_D$ the cotensor product over the coalgebra $D$.

\begin{lemma}\lelabel{monocoalg}

Let $f:C\to D$ be a monomorphism in $\coalg$. Then the scalar coresriction $\mc M^C\to\mc M^D$ is full, and the comultiplication $\Delta_C$ is a bijection of $C$ onto $C\sq_DC\subseteq C\otimes C$. 

\end{lemma}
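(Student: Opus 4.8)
The plan is to prove both assertions by passing to comodule categories, the single point that matters being that a monomorphism in $\coalg$ forces the scalar corestriction functor to be fully faithful. Write $R:\mc M^C\to\mc M^D$ for corestriction along $f$. On underlying linear maps $R$ is the identity, so it is automatically faithful; hence for $R$ the words ``full'' and ``fully faithful'' are interchangeable, and it suffices to prove the fullness of $R$ and then deduce the statement about $\Delta_C$.

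For fullness: since every comodule is the directed union of its finite-dimensional subcomodules and a linear map carries a finite-dimensional subcomodule into a finite-dimensional one, it is enough to show that for finite-dimensional right $C$-comodules $M,N$ every $D$-colinear map $\phi:M\to N$ (for the corestricted coactions) is already $C$-colinear. Here is the key construction. Put $V=M\oplus N$ with the block-diagonal $C$-coaction $\rho^{(1)}=\rho_M\oplus\rho_N$, and let $T=\left(\begin{smallmatrix}\mathrm{id}_M&0\\ \phi&\mathrm{id}_N\end{smallmatrix}\right)$, a linear automorphism of $V$. A one-line computation shows that $T$ is colinear for $\rho_M\oplus\rho_N$ over $C$ (resp.\ over $D$, for the corestricted coactions) if and only if $\phi$ is $C$-colinear (resp.\ $D$-colinear); in particular $T$ is $D$-colinear for $\rho^{(1),D}:=(\mathrm{id}\otimes f)\rho^{(1)}$ by hypothesis. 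Transporting the $C$-coaction along $T$ gives a second $C$-coaction $\rho^{(2)}:=(T\otimes\mathrm{id}_C)\circ\rho^{(1)}\circ T^{-1}$ on $V$, and because $T$ is $D$-colinear for $\rho^{(1),D}$ one gets $(\mathrm{id}\otimes f)\rho^{(2)}=\rho^{(1),D}$ as well. Now use matrix coalgebras: fixing a basis of $V$, a right $C$-comodule structure on the underlying space is the same as a coalgebra map from the matrix coalgebra $\mathbf M^{\dim V}$ to $C$, and corestricting a comodule structure corresponds to post-composing with $f$ (naturality of this correspondence in the target coalgebra). Thus $\rho^{(1)}$ and $\rho^{(2)}$ correspond to maps $g,h:\mathbf M^{\dim V}\to C$ with $fg=fh$; since $f$ is a monomorphism, $g=h$, hence $\rho^{(1)}=\rho^{(2)}$, hence $T$ is $C$-colinear for $\rho^{(1)}$, hence $\phi$ is $C$-colinear. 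So $R$ is full.

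For the statement about $\Delta_C$ I would invoke the adjunction between corestriction and cotensoring: $R$ is left adjoint to $-\sq_D C:\mc M^D\to\mc M^C$, where $C$ is regarded as a $(D,C)$-bicomodule via $f$ on the left and $\Delta_C$ on the right, and the unit of this adjunction at a right $C$-comodule $M$ is the coaction $\rho_M:M\to (RM)\sq_D C$ (it lands in the cotensor product by coassociativity of $\rho_M$, and it has the linear retraction $\mathrm{id}_M\otimes\ve$, so it is always injective). A left adjoint is fully faithful exactly when its unit is a natural isomorphism; having shown $R$ fully faithful, we get that $\rho_M$ is bijective onto $(RM)\sq_D C$ for every $M$, and the case $M=C$ (coaction $\Delta_C$) is precisely the assertion that $\Delta_C$ is a bijection of $C$ onto $C\sq_D C\subseteq C\otimes C$. (This equivalence, among others, can also simply be quoted from \cite{NT}.)

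The finite-dimensional reduction, the matrix-coalgebra description of comodule structures, and the corestriction–cotensor adjunction with its explicit unit are all standard, so I expect the only genuinely new step — and the unique place where the monomorphism hypothesis enters — to be the construction, out of an arbitrary $D$-colinear map $\phi$, of the pair of $C$-coactions $\rho^{(1)},\rho^{(2)}$ sharing a common corestriction; that is where I anticipate the main difficulty, everything after it being bookkeeping.
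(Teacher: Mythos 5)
Your argument is correct, but note that the paper does not actually prove this lemma: it is recalled as a known fact with a pointer to \cite{NT} (and \cite{Ag2}), so there is no in-paper proof to compare against — what you have written is a legitimate self-contained derivation. The monomorphism hypothesis enters exactly where it must: two finite-dimensional right comodule structures on the same underlying space with equal corestrictions along $f$ correspond to two coalgebra maps out of a comatrix coalgebra that $f$ equalizes, hence coincide; your triangular automorphism $T$ of $M\oplus N$ correctly converts the $D$-colinearity of $\phi$ into such a pair of coactions (the computation that $T$ is colinear iff $\phi$ is checks out, as does $(\mathrm{id}\otimes f)\rho^{(2)}=(\mathrm{id}\otimes f)\rho^{(1)}$), and the corestriction--cotensor adjunction, whose unit at $C$ is $\Delta_C:C\to C\sq_DC$, correctly converts fullness (hence full faithfulness, since faithfulness is free) into the second assertion. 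One cosmetic imprecision: in the reduction to finite dimensions, $\phi$ carries a finite-dimensional $C$-subcomodule $M'\subseteq M$ only into a finite-dimensional \emph{subspace} of $N$; you should enclose that subspace in a finite-dimensional $C$-subcomodule $N'\subseteq N$ (possible by local finiteness of comodules) before applying the finite-dimensional case to $\phi|_{M'}:M'\to N'$, and then observe that $C$-colinearity into the subcomodule $N'$ gives $C$-colinearity into $N$. This does not affect correctness.
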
 

\section{First version of the problem}\selabel{2}

The most general form of the problem we are concerned with in this paper consists of the two analogous questions of whether epi(mono)morphisms in the category $\halg$ are surjective (resp. injective). Notice that a map of Hopf algebras $f:H\to K$ is an epimorphism iff the inclusion of the image of $H$ in $K$ is epi. Similarly, when we investigate monomorphisms, we can assume that they are surjective. We will sometimes do this without mentioning it explicitly. 

We shall see that the answers to the two questions are negative, using the following simple observation:

\begin{proposition}\prlabel{Sepi}

The antipode $S$ of a Hopf algebra $H$ is both an epimorphism and a monomorphism in $\halg$ from $H$ to $H^{op,cop}$. 

\end{proposition}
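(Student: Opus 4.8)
The plan is to use the fact recalled in \seref{1} --- that the antipode of a Hopf algebra computes convolution inverses --- together with the observation that $S$, regarded as a map of underlying sets, simultaneously represents several different Hopf algebra morphisms.

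First I would record the structural facts. Since $S$ is an anti-homomorphism both of algebras and of coalgebras, $S\colon H\to H^{op,cop}$ is a bona fide morphism in $\halg$; moreover a direct check on the antipode axioms shows that the antipode of $H^{op,cop}$ is again $S$ (rather than some inverse of it). Applying the same remark with $H$ replaced by $H^{op,cop}$, and using $(H^{op,cop})^{op,cop}=H$, we also obtain a morphism $S\colon H^{op,cop}\to H$ in $\halg$. The key point to keep in mind is that $S\colon H\to H^{op,cop}$, $S\colon H^{op,cop}\to H$, $S_H\colon H\to H$ and $S_{H^{op,cop}}\colon H^{op,cop}\to H^{op,cop}$ are all one and the same function on the common underlying set.

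For the epimorphism claim I would take Hopf algebra maps $g,h\colon H^{op,cop}\to C$ with $g\circ S=h\circ S$ as maps $H\to C$, and deduce $g=h$. Since $g$ is in particular an algebra map out of $H^{op,cop}$, the composite $g\circ S_{H^{op,cop}}$ is the convolution inverse of $g$ in $\ho(H^{op,cop},C)$, and likewise $h\circ S_{H^{op,cop}}$ is the convolution inverse of $h$; but as set-maps $g\circ S_{H^{op,cop}}=g\circ S=h\circ S=h\circ S_{H^{op,cop}}$, so $g$ and $h$ have the same convolution inverse in the monoid $\ho(H^{op,cop},C)$, whence $g=h$. The monomorphism claim is dual: given Hopf algebra maps $f,g\colon C\to H$ with $S\circ f=S\circ g$, use that for a coalgebra map into $H$ the composite with $S_H$ is the convolution inverse in $\ho(C,H)$; then $S_H\circ f$ and $S_H\circ g$ coincide as set-maps and are the convolution inverses of $f$ and $g$, forcing $f=g$.

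I do not expect a serious obstacle; the only thing demanding care is the bookkeeping of the $op$/$cop$ decorations --- in particular the fact that the antipode of $H^{op,cop}$ is $S$ itself, which is precisely what makes the two convolution-inverse computations land in the correct $\ho$-algebras. Note that this argument deliberately does \emph{not} produce an inverse for $S$, consistent with the fact that $S$ need not be bijective.
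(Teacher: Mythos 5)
Your proposal is correct and is essentially the paper's own argument: both hinge on the fact that composing with $S$ produces the convolution inverse of a map (in $\ho(H^{cop},A)$ for the epi half, in $\ho(C,H)$ for the mono half), so a morphism is uniquely determined by its composite with $S$. Your explicit observation that the antipode of $H^{op,cop}$ is $S$ itself is just a repackaging of the paper's phrasing in terms of algebra maps out of $H^{op}$.
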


\begin{proof}

$S$ is an epimorphism iff for any Hopf algebra $K$, the map 
\[
\halg(H^{op,cop},K)\to \halg(H,K)
\] 
induced by it and defined by $f\mapsto fS$ is injective. More generally, if $A$ is an algebra and $f$ is an algebra map from $H^{op}$ to $A$, then $fS$ is the inverse of $f$ in the monoid $\ho(H^{cop},A)$ under convolution (here, $H$ is viewed only as a coalgebra). It follows that $f$ is uniquely determined by $fS$, which is what we needed. 

The statement that $S$ is mono is proven similarly: we have to show that for any Hopf algebra $K$, the map 
\[
\halg(K,H)\to \halg(K,H^{op,cop}) 
\]
given by $f\mapsto Sf$ is injective. Again, this holds more generally, if we replace $K$ with a coalgebra $C$ and $\halg$ with $\coalg$, simply by noticing that $Sf$ is the inverse of $f\in\coalg(C,H)$ in $\ho(C,H)$.
\end{proof}

The negative answers to our two questions now follow from the fact that there exist Hopf algebras with pathological (non-surjective or non-injective) antipode. A Hopf algebra with non-bijective antipode is already constructed in \cite{Ta1}. However, we need the more specific result (\cite{Ni}) that Takeuchi's algebra has a non-surjective antipode. In fact, Nichols also shows in \cite{Ni} that the antipode is injective. The Hopf algebra in question is the free Hopf algebra $H(M_n(k)^*)$ (a construction introduced in \cite{Ta1}) on the coalgebra $M_n(k)^*$, the dual of the matrix algebra $M_n(k)$ for $n>1$. We shall have more to say about such universal constructions in the next section. 

As for the injectivity of the antipode, Takeuchi proves (\cite[Theorem 9]{Ta2}) that either the same free Hopf algebra $H(M_n(k)^*)$ has a non-injective antipode (as mentioned above, we know this to be false from \cite{Ni}), or some quotient of $H(M_{2n}(k)^*)$ does. Also, Schauenburg constructs in \cite{Sc} a Hopf algebra with a surjective, non-injective antipode. Given these pathological examples and the previous proposition, we get

\begin{corollary}

There exist (injective) non-surjective epimorphisms in $\halg$, as well as (surjective) non-injective monomorphisms. 

\end{corollary}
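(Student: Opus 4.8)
The plan is to obtain both statements as immediate consequences of \prref{Sepi}, feeding it the two kinds of pathological examples already recalled above. The only preliminary observation needed is that the underlying vector space (indeed the underlying set) of $H^{op,cop}$ is literally that of $H$, so that the antipode $S$, viewed as the Hopf algebra map $H\to H^{op,cop}$ of \prref{Sepi}, coincides as a linear map with $S\colon H\to H$. Hence $S\colon H\to H^{op,cop}$ is surjective (respectively injective) as a morphism in $\halg$ if and only if $S\colon H\to H$ is surjective (respectively injective) as a linear map.

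For the first assertion I would take $H=H(M_n(k)^*)$ for $n>1$, the free Hopf algebra on the coalgebra $M_n(k)^*$ from \cite{Ta1}. By Nichols' analysis in \cite{Ni}, the antipode of this $H$ is injective but not surjective. By \prref{Sepi} the map $S\colon H\to H^{op,cop}$ is an epimorphism in $\halg$, and by the observation above it is injective but not surjective, as desired.

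For the second assertion I would instead take $H$ to be Schauenburg's Hopf algebra from \cite{Sc}, whose antipode is surjective but not injective (alternatively one may combine \cite[Theorem 9]{Ta2} with \cite{Ni} to exhibit a suitable quotient of some $H(M_{2n}(k)^*)$ with this property). Applying \prref{Sepi} once more, $S\colon H\to H^{op,cop}$ is a monomorphism in $\halg$ which is surjective but not injective.

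Since the two substantive ingredients --- the categorical statement \prref{Sepi} and the existence of Hopf algebras with non-surjective, respectively surjective-but-non-injective, antipode --- are already available, there is no genuine obstacle here; the only point requiring a moment's care is the harmless identification of (in)jectivity and (non-)surjectivity of the antipode regarded as a Hopf algebra morphism into $H^{op,cop}$ with the same properties of the antipode regarded as an endomorphism of the underlying space, which is clear once one notes that $H^{op,cop}$ and $H$ have the same underlying set.
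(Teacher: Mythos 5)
Your proposal is correct and is exactly the paper's argument: the corollary is obtained by combining \prref{Sepi} with the known examples of Hopf algebras having injective non-surjective antipode (\cite{Ni}) and surjective non-injective antipode (\cite{Sc}, or \cite{Ta2} together with \cite{Ni}). The extra remark identifying the underlying set of $H^{op,cop}$ with that of $H$ is a harmless and accurate point of care that the paper leaves implicit.
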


In the next section we will also see examples of non-surjective epimorphisms $H\to K$ with $K$ having a bijective antipode, and of non-injective monomorphisms $H\to K$ with $H$ having a bijective antipode. We do not know if both algebras can be chosen to have bijective antipode in such counterexamples. 

As it turns out, the problem epi vs. surjective is linked to Kaplansky's first conjecture. The more modern version of this conjecture asked whether all Hopf algebras are (left and right) faithfully flat over their Hopf subalgebras (\cite[Question 3.5.4]{Mo}). Schauenburg gave some counterexamples in \cite{Sc}, and strengthened the hypotheses further: are Hopf algebras with bijective antipode faithfully flat over Hopf subalgebras with bijective antipode? In order to see the connection between the two problems, we need the following simple result on faithful flatness:

\begin{proposition}\prlabel{ff}

Let $\iota:A\to B$ be a left faithfully flat extension of algebras. If $\iota$ is an epimorphism in $\alg$, then it is an isomorphism.  

\end{proposition}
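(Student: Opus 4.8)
The plan is to reduce the proposition to two ingredients: a characterization of epimorphisms in $\alg$, and the faithfully flat descent of injectivity and surjectivity. The first ingredient is the (well-known) assertion that $\iota\colon A\to B$ is an epimorphism in $\alg$ if and only if the multiplication map $\mu\colon B\otimes_AB\to B$, $b\otimes b'\mapsto bb'$, is an isomorphism; the second is that, $B$ being left faithfully flat over $A$, a right $A$-module $M$ vanishes as soon as $M\otimes_AB=0$, while $-\otimes_AB$ is exact.

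For the characterization of epimorphisms: if $\mu$ is an isomorphism, then the two canonical $k$-linear maps $b\mapsto b\otimes 1$ and $b\mapsto 1\otimes b$ from $B$ to $B\otimes_AB$ are both right inverses of $\mu$, hence, $\mu$ being in particular injective, they coincide; consequently, for algebra maps $g,h\colon B\to C$ with $g\iota=h\iota$, the $k$-linear map $B\otimes_AB\to C$ sending $b\otimes b'$ to $g(b)h(b')$ (well defined because $g\iota=h\iota$), evaluated at $b\otimes 1=1\otimes b$, gives $g(b)=h(b)$, so $\iota$ is epi. Conversely, if $\iota$ is epi, I would use the square-zero extension $B\ltimes(B\otimes_AB)$ together with the two algebra homomorphisms $B\to B\ltimes(B\otimes_AB)$ given by $b\mapsto(b,\,b\otimes 1-1\otimes b)$ and $b\mapsto(b,0)$; these agree on $\iota(A)$ since $\iota(a)\otimes 1=1\otimes\iota(a)$ in $B\otimes_AB$, so they are equal, which forces $b\otimes 1=1\otimes b$ for every $b\in B$. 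Granting this, any $\sum_ib_i\otimes b_i'$ in $\ker\mu$ equals $\sum_i(b_ib_i')\otimes 1=\bigl(\sum_ib_ib_i'\bigr)\otimes 1=0$ (moving the second tensorands to the left factor via the $B$-action on $B\otimes_AB$), so $\mu$ is injective; being visibly surjective, it is an isomorphism.

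With this in hand, suppose $\iota$ is an epimorphism, so $\mu$ is an isomorphism and therefore $b\mapsto 1\otimes b$ is an isomorphism $B\xrightarrow{\ \sim\ }B\otimes_AB$, namely $\mu^{-1}$. I then apply the exact functor $-\otimes_AB$ to the exact sequence $0\to\ker\iota\to A\xrightarrow{\iota}B\to{\rm coker}\,\iota\to 0$ of right $A$-modules; using the natural isomorphism $A\otimes_AB\cong B$, the induced map $A\otimes_AB\to B\otimes_AB$ is precisely $b\mapsto 1\otimes b$, whence $(\ker\iota)\otimes_AB$ and $({\rm coker}\,\iota)\otimes_AB$ are identified with the kernel and cokernel of an isomorphism, both zero. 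Faithful flatness then gives $\ker\iota=0$ and ${\rm coker}\,\iota=0$, so $\iota$ is a bijective algebra homomorphism, hence an isomorphism in $\alg$.

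I expect the only real obstacle to be the implication ``$\iota$ epi $\Rightarrow\mu$ iso'' in the stated generality, where $A$ is not assumed commutative and $B\otimes_AB$ is therefore merely a bimodule: this is exactly why the trivial-extension construction is needed rather than a direct manipulation inside $B\otimes_AB$. Once the epimorphism criterion is available, the role of faithful flatness is the routine descent argument above, and no further subtleties arise.
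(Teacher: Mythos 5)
Your argument is correct and follows essentially the same route as the paper: both proofs reduce to the fact that $\iota$ epi forces $b\otimes_A 1=1\otimes_A b$ in $B\otimes_AB$ (which the paper cites from Stenstr\"om and you reprove via the square-zero extension), deduce that $B\cong A\otimes_AB\to B\otimes_AB$ is an isomorphism, and conclude by faithfully flat descent. The only difference is presentational: you make the cited lemma self-contained and phrase the descent step through the kernel--cokernel sequence rather than directly invoking that $-\otimes_AB$ reflects isomorphisms.
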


\begin{proof}

The fact that $\iota$ is epi implies that $b\otimes_A 1=1\otimes_Ab$ in $B\otimes_AB$ for all $b\in B$ (\cite[Chapter XI, Prop. 1.1]{St}). It follows immediately from this last condition that the map $\iota\otimes_AI_B:B\to B\otimes_AB$ is an isomorphism of right $B$-modules (actually, it follows that the map is surjective; the injectivity is clear from the fact that the multiplication $B\otimes_AB\to B$ is a left inverse for $\iota\otimes_AI_B$). By faithful flatness, $\iota$ must be an isomorphism of right $A$-modules. 
\end{proof}

The fact that the forgetful functor $\halg\to\alg$ has a right adjoint (\cite[Theorem 3.3]{Ag1}; the result is dual to Takeuchi's construction of a free Hopf algebra on a coalgebra in \cite{Ta1}), together with the easy-to-prove results that (a) left adjoints preserve epimorphisms and (b) faithful functors reflect epimorphisms, imply

\begin{proposition}\prlabel{halg>alg}

A morphism of Hopf algebras $f:H\to K$ is an epimorphism if and only if it is an epimorphism in $\alg$, when viewed as a map of algebras. 

\end{proposition}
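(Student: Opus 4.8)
The plan is to obtain both implications formally, by applying two elementary categorical facts to the forgetful functor $U:\halg\to\alg$, exactly as the paragraph preceding the statement indicates.

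For the direction ``epi in $\alg$ $\Rightarrow$ epi in $\halg$'' I would use only that $U$ is faithful, together with the observation that faithful functors reflect epimorphisms. Concretely, suppose $Uf$ is an epimorphism of algebras, and let $g_1,g_2:K\to L$ be morphisms in $\halg$ with $g_1f=g_2f$. Applying $U$ gives $U(g_1)\circ Uf=U(g_2)\circ Uf$, so $U(g_1)=U(g_2)$ because $Uf$ is epi, and then $g_1=g_2$ by faithfulness of $U$; hence $f$ is an epimorphism in $\halg$.

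For the converse I would invoke the existence of a right adjoint $R:\alg\to\halg$ of $U$ --- this is \cite[Theorem 3.3]{Ag1}, the ``cofree Hopf algebra'' construction dual to Takeuchi's free Hopf algebra on a coalgebra --- which makes $U$ a left adjoint, hence a preserver of epimorphisms. I would spell this out as follows: let $f:H\to K$ be an epimorphism in $\halg$, and let $g_1,g_2:UK\to A$ be algebra maps with $g_1\circ Uf=g_2\circ Uf$. Transpose $g_1,g_2$ under the adjunction bijection $\alg(UK,A)\cong\halg(K,RA)$ to get $\widetilde g_1,\widetilde g_2:K\to RA$ in $\halg$; naturality of this bijection in the first variable identifies the transpose of $g_i\circ Uf$ with $\widetilde g_i\circ f$, so $g_1\circ Uf=g_2\circ Uf$ gives $\widetilde g_1\circ f=\widetilde g_2\circ f$, whence $\widetilde g_1=\widetilde g_2$ since $f$ is epi in $\halg$, and transposing back gives $g_1=g_2$. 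Thus $Uf$ is an epimorphism in $\alg$.

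The only input here that is not a routine diagram chase is the existence of the right adjoint $R$, and that is imported wholesale from \cite{Ag1}; so I do not expect a genuine obstacle in the argument itself, which amounts to assembling the two standard facts (a) ``left adjoints preserve epimorphisms'' and (b) ``faithful functors reflect epimorphisms'' around that adjunction.
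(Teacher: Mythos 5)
Your proof is correct and is exactly the paper's argument: the paper derives the proposition from the same two facts --- the forgetful functor $\halg\to\alg$ is faithful (hence reflects epimorphisms) and has a right adjoint by \cite[Theorem 3.3]{Ag1} (hence preserves epimorphisms). You have merely written out the routine verifications that the paper leaves implicit.
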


We also record the dual statement, which follows by the dual argument: by \cite{Ta1} the forgetful functor $\halg\to\coalg$ is a right adjoint, and hence preserves monomorphisms.

\begin{proposition}\prlabel{halg>coalg}

A morphism of Hopf algebras $f:H\to K$ is a monomorphism if and only if it is a monomorphism in $\coalg$, when viewed as a map of coalgebras. 

\end{proposition}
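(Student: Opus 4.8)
The plan is to mirror the proof of \prref{halg>alg} under the algebra/coalgebra duality, interchanging throughout the roles of epimorphism and monomorphism, of left and right adjoint, and of \emph{preserves} and \emph{reflects}. Write $U:\halg\to\coalg$ for the forgetful functor, so that the claim is: $f$ is a monomorphism in $\halg$ if and only if $U(f)$ is a monomorphism in $\coalg$.

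For the ``only if'' direction I would recall from \cite{Ta1} that there is a free Hopf algebra on any coalgebra; this is a functor $\coalg\to\halg$ left adjoint to $U$, so $U$ is itself a \emph{right} adjoint. Right adjoints preserve all limits, and a morphism $g$ is a monomorphism exactly when the square having $g$ along two parallel edges and identities along the other two is a pullback; applying $U$ to such a pullback square shows that $U$ carries monomorphisms to monomorphisms. Hence if $f$ is monic in $\halg$ then $U(f)$ is monic in $\coalg$.

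For the converse I would use only that $U$ is faithful, together with the elementary fact that faithful functors reflect monomorphisms: if $fg=fh$ for a parallel pair $g,h$ into the source of $f$, then $U(f)U(g)=U(f)U(h)$, so $U(g)=U(h)$ because $U(f)$ is monic, and finally $g=h$ by faithfulness of $U$. Thus $U(f)$ monic forces $f$ monic. Combining the two directions gives the statement; if one wants the ``concrete'' content of the coalgebra-side condition, it can then be unpacked via \leref{monocoalg}.

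There is no genuine difficulty here; the only thing to watch is the bookkeeping of the duality, namely that the forgetful functor landing in $\coalg$ is the right (not the left) adjoint --- which is precisely why this statement is the mirror image of \prref{halg>alg} rather than a literal restatement of it.
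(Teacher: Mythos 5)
Your argument is correct and is essentially identical to the one the paper gives: the forgetful functor $\halg\to\coalg$ is a right adjoint by Takeuchi's free Hopf algebra construction, hence preserves monomorphisms, and being faithful it also reflects them. The pullback-square justification for ``right adjoints preserve monos'' is a standard elaboration of the same fact the paper invokes without comment.
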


\prref{ff} and \prref{halg>alg} show that epimorphisms of Hopf algebras are indeed surjective whenever Kaplansky's conjecture holds, i.e. in those stuations when we do have faithful flatness. Such situations are, for instance, the case when (same notations as in the statement of \prref{halg>alg}) $K$ is commutative, or has cocommutative coradical, or is pointed (\cite[Theorem 3.1]{Ta3} takes care of the cases when $K$ is either commutative or cocommutative, but \cite[Theorem 3.2]{Ta3} easily implies the cocommutative coradical and the pointed cases as well; later, Radford proved in \cite{Ra1} that pointed Hopf algebras are, in fact, free over their Hopf subalgebras). 

The contrapositive is that counterexamples to epi $\Rightarrow$ surjective are counterexamples to Kaplansky's first conjecture. In particular, by \prref{Sepi}, we recover Schauenburg's example (\cite[Remark 2.6]{Sc}) $S(H)\subset H$ of a non-faithfully flat inclusion of Hopf algebras whenever the antipode $S$ of $H$ is not surjective. 

The fact that epi implies surjectivity in the cocommutative case, for example, can be used, together with some adjunctions, to prove the classical results that epimorphisms are surjective in the categories of groups or Lie algebras. See also \cite[Prop. 3,4]{Re} for an interesting method of proof, using split extensions of groups and Lie algebras, respectively.

The discussion above on the connection between faithful flatness over Hopf subalgebras and epimorphisms in $\halg$ can be dualized: one can ask when a surjection of Hopf algebras is faithfully coflat (see \seref{1}), and investigate the relation between this question and the problem of determining if/when monomorphisms of Hopf algebras are injective. Faithful coflatness appears in \cite{AD}, for example, along with faithful flatness, as an important technical condition (see the dual pair of results \cite[Corollaries 1.2.5, 1.2.14]{AD}). 

We now want to prove the dual of \prref{ff}. Together with \prref{halg>coalg}, it will establish the connection between faithful coflatness and the injectivity of monomorphisms in $\halg$: if the surjective monomorphism $H\to K$ happens to be faithfully coflat, then it is an isomorphism. Again, the contrapositive is that whenever we have a non-injective monomorphism in $\halg$ (which we may as well assume is surjective), we have an example of non-faithfully coflat surjection of Hopf algebras.

\begin{proposition}\prlabel{ff'}

Let $f:C\to D$ be map of coalgebras, making $C$ left faithfully coflat over $D$. If $f$ is a monomorphism in $\coalg$, then it is an isomorphism. 

\end{proposition}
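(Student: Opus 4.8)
The plan is to dualize the proof of \prref{ff} line by line, the co-analogue of its key input --- that an algebra epimorphism $\iota:A\to B$ forces $b\otimes_A1=1\otimes_Ab$, equivalently that the multiplication $B\otimes_AB\to B$ is bijective --- being supplied by \leref{monocoalg}. First I would fix the relevant structures: through $f$ the coalgebra $C$ becomes a $D$-bicomodule, with left coaction $\rho=(f\otimes I_C)\Delta_C:C\to D\otimes C$ and right coaction $(I_C\otimes f)\Delta_C$; with respect to the latter, $f:C\to D$ is a morphism in $\mc M^D$, which merely restates that $f$ is a coalgebra map. I would also recall the canonical linear isomorphism $\rho:C\to D\sq_DC$ (the left coaction, corestricted to its image), whose inverse is the corestriction of $\ve_D\otimes I_C$; it plays the role of the identification $B\cong A\otimes_AB$ in \prref{ff}.

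Next, since $f$ is a monomorphism in $\coalg$, \leref{monocoalg} gives that $\Delta_C$ is a linear bijection of $C$ onto $C\sq_DC\subseteq C\otimes C$. I would then consider the map $f\sq_DI_C:C\sq_DC\to D\sq_DC$, namely the restriction of $f\otimes I_C:C\otimes C\to D\otimes C$, and record the identity $(f\sq_DI_C)\circ\Delta_C=(f\otimes I_C)\Delta_C=\rho$. Since $\Delta_C$ and $\rho$ are both bijective, so is $f\sq_DI_C$. But $f\sq_DI_C$ is precisely the image of the morphism $f:C\to D$ of $\mc M^D$ under the functor $-\sq_DC$ on $\mc M^D$, and left faithful coflatness of $C$ over $D$ says exactly that this functor is exact and faithful, hence reflects isomorphisms. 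Therefore $f$ is an isomorphism in $\mc M^D$; being in particular bijective and a coalgebra map, it is an isomorphism in $\coalg$.

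The routine verifications are that $\Delta_C(C)\subseteq C\sq_DC$, that $(f\otimes I_C)(C\sq_DC)\subseteq D\sq_DC$ (so that $f\sq_DI_C$ is well defined), and that the identity $(f\sq_DI_C)\circ\Delta_C=\rho$ holds --- each a short computation from coassociativity and from $f$ being a coalgebra map. The one point genuinely needing care, and the main (if mild) obstacle, is keeping the left/right conventions aligned so that ``$C$ left faithfully coflat over $D$'' is matched with the functor $-\sq_DC$ acting on $\mc M^D$, and recognizing that \leref{monocoalg} together with the canonical isomorphism $C\cong D\sq_DC$ are exactly the dual ingredients that the argument of \prref{ff} consumes. (That a faithful exact functor between abelian categories reflects isomorphisms is standard, and in fact only faithfulness is used: a faithful functor reflects both monomorphisms and epimorphisms, and a morphism of an abelian category that is both is an isomorphism.)
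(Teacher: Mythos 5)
Your argument is correct and is essentially the paper's own proof: both use \leref{monocoalg} to see that $\Delta_C:C\to C\sq_DC$ is bijective, observe that $f\sq_DI_C$ composed with it gives the canonical isomorphism $C\cong D\sq_DC$ (so $f\sq_DI_C$ is bijective), and then invoke faithful coflatness to conclude that $-\sq_DC$ reflects isomorphisms. Your extra remarks on the well-definedness of $f\sq_DI_C$ and on why a faithful exact functor reflects isomorphisms are just the routine details the paper leaves implicit.
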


\begin{proof}

Since $f$ is a monomorphism, we know from \leref{monocoalg} that the canonical map $C\to C\sq_DC$ is bijective. The map 
\[
f\sq_DI_C:C\sq_DC\to D\sq_DC\cong C
\]
is a left inverse for $C\to C\sq_DC$, so it must also be bijective. Faithful coflatness implies that $-\sq_DC$ reflects isomorphisms, so $f$ must be an isomorphism. 
\end{proof}

Finally, we end this section with a consequence of \prref{Sepi} giving a sufficient condition for the antipode of a Hopf algebra to be surjective. We do not use this result elsewhere in the paper.

\begin{proposition}\prlabel{Scorad}

Let $H$ be a Hopf algebra with antipode $S$. If $S(H)$ contains the coradical $H_0$ of $H$, then $S$ is surjective. 

\end{proposition}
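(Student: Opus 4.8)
The plan is to exploit that $S$, as a map $H\to H^{op,cop}$, is an epimorphism in $\halg$ (\prref{Sepi}) together with the hypothesis on the coradical, via \prref{ff}. First I would observe that the image $S(H)$ is a Hopf subalgebra of $H^{op,cop}$: it is the image of the Hopf algebra morphism $S:H\to H^{op,cop}$, so it is a sub-bialgebra, and it is stable under the antipode of $H^{op,cop}$ (which is again $S$) since $S^2(H)\subseteq S(H)$. Thus $S$ factors in $\halg$ as $H\twoheadrightarrow S(H)\stackrel{j}{\hookrightarrow}H^{op,cop}$. Since $S$ is an epimorphism and it factors through $j$ on the left, $j$ itself is an epimorphism in $\halg$ (if $gj=hj$ then $gS=hS$, hence $g=h$). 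By \prref{halg>alg}, $j$ is therefore an epimorphism of algebras.

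Next I would identify the coradical of $H^{op,cop}$ with $H_0$ as subspaces of $H$: passing to the opposite algebra does not affect the coalgebra structure, and passing to the coopposite coalgebra merely permutes the tensor factors, which carries simple subcoalgebras to simple subcoalgebras with the same underlying subspace. Hence $\mathrm{corad}(H^{op,cop})=H_0$, and the hypothesis $H_0\subseteq S(H)$ says precisely that $S(H)$ is a Hopf subalgebra of $H^{op,cop}$ containing its coradical.

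Now I would invoke the result of Nichols quoted in the introduction (also a consequence of \cite[Cor. 2.3]{Ra2}): a Hopf algebra is free, in particular left faithfully flat, over any Hopf subalgebra containing its coradical. Applied to $S(H)\subseteq H^{op,cop}$, this makes $H^{op,cop}$ left faithfully flat over $S(H)$. So $j$ is a left faithfully flat epimorphism of algebras, and \prref{ff} forces $j$ to be an isomorphism. Consequently $S(H)=H^{op,cop}=H$ as sets, i.e. $S$ is surjective.

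I do not expect a serious obstacle here; the only points needing care are the two routine verifications above — that $S(H)$ is a genuine Hopf subalgebra of $H^{op,cop}$ (closure under the structure maps, including the antipode), and that the coradical is unchanged by passing to $H^{op,cop}$ — both of which are standard and can be dispatched in a line or two.
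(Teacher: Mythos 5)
Your proposal is correct and follows exactly the paper's own argument: factor $S$ through its image, observe that the inclusion $S(H)\subseteq H^{op,cop}$ is an epimorphism by \prref{Sepi}, invoke Nichols's freeness (hence faithful flatness) over a Hopf subalgebra containing the coradical, and conclude via \prref{ff}. The extra verifications you flag (that $S(H)$ is a Hopf subalgebra and that the coradical is unchanged under passing to $H^{op,cop}$) are routine details the paper leaves implicit.
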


\begin{proof}

\prref{Sepi} says that the inclusion $S(H)\to H$ is epi. On the other hand, as $S(H)$ contains the coradical $H_0$, the inclusion is faithfully flat (in fact, $H$ is even free over $S(H)$, by a result of Nichols; it is also an immediate consequence of \cite[Cor. 2.3]{Ra2}). By \prref{ff}, we are done: the inclusion of $S(H)$ in $H$ must be surjective. 
\end{proof}

\section{Adjunctions and bijective antipodes}\selabel{3}

We have seen in the previous section that one can find both non-surjective epimorphisms and non-injective monomorphisms in the category $\halg$. We now strengthen the hypotheses: for epimorphisms $H\to K$, we ask that $K$ have bijective antipode. Similarly, for monomorphisms $H\to K$, we ask that $H$ have bijective antipode. Again, we find counterexamples in these situations. I do not know what happens if {\it both} Hopf algebras are required to have bijective antipodes. 

The construction is as follows: 

In \cite{Sc}, Schauenburg constructs the left adjoint, which we denote here by $K^*$, of the inclusion $i:\shalg\to\halg$ (recall that $\shalg$ is the category of Hopf algebras with bijective antipode; we will sometimes omit the inclusion functor), and proves (\cite[Cor. 2.8]{Sc}) that the unit $H\to K^*(H)$ of the adjunction is a non-faithfully flat inclusion of Hopf algebras whenever $H$ has injective non-bijective antipode (in fact, he proves more, namely that the inclusion does not have a certain property (P), weaker that faithful flatness). We show here that the unit $H\to K^*(H)$ is always an epimorphism of Hopf algebras. We also prove that the inclusion $i$ has a right adjoint $K_*$, and that the counit $K_*(H)\to H$ of the resulting adjunction is always a monomorphism of Hopf algebras. These will be examples of non-surjective epimorphisms and non-injective monomorphisms, with our extra requirements on the antipodes, when the antipode of Hopf algebra $H$ is ``pathological''. 

There seems to be an interesting parallel between the pairs of categories $\bialg,\halg$ on the one hand and $\halg,\shalg$ on the other; in order to emphasize it, we also carry out the arguments outlined above for the inclusion $j:\halg\to\bialg$. The existence of the left adjoint to this inclusion is a classical result of Takeuchi (\cite{Ta1}; even though Takeuchi passes directly from coalgebras to Hopf algebras, the intermediary adjoint from $\halg$ to $\bialg$ is easily deduced, and the construction is given explicitly in \cite[Theorem 2.6.3]{Pa}), and the existence of a right adjoint is proven in \cite[Theorem 3.3]{Ag1}. We state here the existence result for these adjoints:

\begin{theorem}\thlabel{exadj}

(a) The inclusion $j:\halg\to\bialg$ has both a left adjoint $H^*$ and a right adjoint $H_*$. 

(b) The inclusion $i:\shalg\to\halg$ has both a left adjoint $K^*$ and a right adjoint $K_*$.  
 
\end{theorem}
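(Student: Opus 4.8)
The plan is to deduce the whole statement from the adjoint functor theorem for locally presentable categories (\cite[Chapter~1]{ARo}): if $\mathcal A$ is locally presentable, a functor with domain $\mathcal A$ has a right adjoint exactly when it preserves small colimits, and a left adjoint exactly when it preserves small limits and is accessible. So it suffices to check that (1) $\coalg$, $\bialg$, $\halg$ and $\shalg$ are locally presentable, and (2) the full inclusions $j$ and $i$ preserve all small limits and all small colimits. Three of the four adjoints are in any case already available in the literature — $H^{*}$ is Takeuchi's free Hopf algebra on a bialgebra (\cite{Ta1}, constructed explicitly in \cite{Pa}), $H_{*}$ is \cite[Theorem 3.3]{Ag1}, and $K^{*}$ is Schauenburg's free Hopf algebra with bijective antipode (\cite{Sc}) — and uniqueness of adjoints identifies them with the ones obtained this way; the genuinely new object is the cofree Hopf algebra with bijective antipode $K_{*}(H)$.

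For (1): every coalgebra is the directed union of its finite-dimensional subcoalgebras, so $\coalg$ is locally presentable, and hence so is the category $\bialg$ of monoids (= bialgebras) in the monoidal category $(\coalg,\otimes_{k})$. For $\halg$, recall that the inclusion $j$ is reflective by Takeuchi's theorem; since, as checked below, $j$ also preserves filtered colimits, $\halg$ is an accessibly embedded reflective subcategory of a locally presentable category, hence is itself locally presentable. Running the same argument with Schauenburg's reflector $K^{*}$ in place of $H^{*}$ shows that $\shalg$ is locally presentable as well. (Alternatively, one may simply invoke the fact that $\halg$ over a field is known to be locally presentable.)

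For (2), the colimit half is the heart of the matter: it says precisely that $\halg$ is closed under colimits formed in $\bialg$, and $\shalg$ under colimits formed in $\halg$. As $\bialg$ is cocomplete, it is enough to treat coproducts and coequalizers. On a coproduct of Hopf algebras (a free product of bialgebras) one builds an antipode on a word $a_{1}b_{1}a_{2}b_{2}\cdots$ by reversing the word and applying the antipodes of the factors. For a coequalizer of bialgebra maps $f,g\colon B'\to B$ between Hopf algebras, the relevant bi-ideal $I\subseteq B$ is the two-sided ideal generated by the elements $f(x)-g(x)$; since $S_{B}(f(x)-g(x))=f(S_{B'}x)-g(S_{B'}x)\in I$ and $S_{B}$ is an algebra anti-endomorphism, $S_{B}(I)\subseteq I$, so $S_{B}$ descends to the quotient. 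Hence $j$ preserves colimits. For $i$ one runs the same two computations while also keeping track of the inverse antipode: on a free product of objects of $\shalg$ its inverse is again obtained by word-reversal, now using the inverse antipodes of the factors, and for a coequalizer the very same reasoning (together with the fact that $\shalg$-morphisms commute with the inverse antipode) gives $S(I)=I$, so the antipode on the colimit stays bijective; thus $i$ preserves colimits, and in particular both $j$ and $i$ preserve filtered colimits, which is what was used in (1). The limit half is routine: $\halg$ is closed under products in $\bialg$, the equalizer of two Hopf-algebra maps formed in $\bialg$ is a sub-bialgebra stable under the antipode and hence a Hopf algebra, and an equalizer of $\shalg$-morphisms is moreover stable under the inverse antipode and hence lies in $\shalg$; and $j$, $i$ are accessible because they preserve all colimits.

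With (1) and (2) established, the adjoint functor theorem for locally presentable categories produces simultaneously the left adjoints $H^{*}$, $K^{*}$ and the right adjoints $H_{*}$, $K_{*}$. I expect the main difficulty to be organizational rather than conceptual: collecting the local-presentability input in a clean way (and deciding how much to prove versus cite), and carrying out the elementary but somewhat fussy verification that antipodes and their inverses persist under passage to quotient bialgebras and to free products. The one point genuinely requiring care is that all of these colimits must be computed in $\bialg$ (respectively $\halg$), and not in $\alg$, since the forgetful functor to $\alg$ does not preserve colimits.
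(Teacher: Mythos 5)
Your argument is correct in outline but follows a genuinely different route from the paper. The paper constructs $H_*(B)$ explicitly, as the largest subcoalgebra of the $\bialg$-product $\prod_n B_n$ (with $B_n$ alternating between $B$ and $B^{op,cop}$) on which the induced anti-endomorphism acts as an antipode, and then obtains $K_*(H)=(H_*(H^{op}))^{op}$ after checking (\leref{skant}) that $H_*(B)$ has bijective antipode whenever $B$ has a skew antipode. You instead invoke the adjoint functor theorem for locally presentable categories, reducing everything to local presentability of $\bialg$, $\halg$, $\shalg$ plus preservation of colimits by $j$ and $i$. Your colimit verifications are sound: closure of $\halg$ (resp.\ $\shalg$) under free products and under quotients by the bi-ideal generated by $\{f(x)-g(x)\}$ is exactly what is needed, and the local presentability chain ($\coalg$, then $\bialg$ via \cite{Po1}, then $\halg$ and $\shalg$ as accessibly embedded reflective subcategories via \cite{Ta1,Pa} and \cite{Sc}) is the same scaffolding the paper itself relies on. What your approach buys is brevity and uniformity -- all four adjoints come out of one theorem. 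What it loses is precisely what the paper advertises as its contribution: an \emph{explicit} description of $H_*$ and $K_*$, which answers \cite[Problem 2]{Ag1} and which the paper's \leref{skant} and the proof of part (b) actively use (the maps $\pi_n$ and the identity $\pi_nS=\pi_{n+1}$). Your existence proof is perfectly adequate for the statement of the theorem as written, since \thref{adjepimono} itself only uses abstract adjunction formalism plus properties of the \emph{left} adjoints.

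One point you dismiss too quickly: the claim that ``$\halg$ is closed under products in $\bialg$'' is not routine. Products in $\bialg$ are created in $\coalg$, and the projections out of a product of coalgebras are jointly monic only against \emph{coalgebra} maps, not against arbitrary linear maps; since $S\ast\mathrm{id}$ is not a coalgebra map, you cannot verify the antipode identity componentwise, and the ``largest subcoalgebra on which the candidate antipode works'' could a priori be proper (this is exactly why the paper's construction of $H_*(B)$ takes that subcoalgebra rather than all of $P$). The statement is nevertheless true, but the only clean justification is that $j$ preserves limits \emph{because} it already has a left adjoint by \cite{Ta1,Pa} -- i.e., it is a consequence of the left-adjoint half, not an input to it. Fortunately your proof does not actually need the limit half anywhere: the left adjoints are citable, local presentability of $\halg$ and $\shalg$ uses only reflectivity and filtered colimits, and the new right adjoints need only the colimit computations. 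So the gap sits in a dispensable part of your argument; you should either delete the limit discussion or replace ``routine'' with the observation that limit preservation by $j$ and $i$ follows formally from the existence of the left adjoints.
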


Before going into the proof (which will consist mainly of the constructions of the right adjoints to the inclusions, since the left adjoints are constructed explicitly in \cite{Ta1,Pa} and \cite{Sc} as indicated above), we state and prove the main result of this section, and derive some consequences. We keep the notations from the statement of \thref{exadj}.

\begin{theorem}\thlabel{adjepimono}

(a) For every bialgebra $B$, the component $B\to H^*(B)$ of the unit of the adjunction $(H^*,j)$ is an epimorphism of bialgebras, and the component $H_*(B)\to B$ of the counit of the adjunction $(j,H_*)$ is a monomorphism of bialgebras. 

(b) For any Hopf algebra $H$, the unit $H\to K^*(H)$ of the adjunction $(K^*,i)$ is an epimorphism of Hopf algebras, and the counit $K_*(H)\to H$ of the adjunction $(i,K_*)$ is a monomorphism of Hopf algebras. 

\end{theorem}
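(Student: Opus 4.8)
The four assertions fall into dual pairs, and I would establish all of them by a single method with two formally dual halves. I shall describe the case of the inclusion $i\colon\shalg\to\halg$ (part (b)); part (a), for $j\colon\halg\to\bialg$, is obtained by replacing $\shalg,\halg$ by $\halg,\bialg$ throughout and, since a general Hopf algebra need not have bijective antipode, iterating only over nonnegative powers of $S$ below. Note first that in checking that a map is an epimorphism one only ever composes with algebra maps, and in checking it is a monomorphism one only ever composes with coalgebra maps; so I may treat the morphisms out of $K^*(H)$ (resp.\ into $K_*(H)$) as plain algebra (resp.\ coalgebra) maps. The engine throughout is the pair of antipode identities $\sum x_{(1)}S(x_{(2)})=\varepsilon(x)1=\sum S(x_{(1)})x_{(2)}$ together with their skew-antipode counterparts $\sum\bar S(x_{(2)})x_{(1)}=\varepsilon(x)1=\sum x_{(2)}\bar S(x_{(1)})$, which will let me transport information across the subcoalgebras $S^{\pm n}(\,\cdot\,)$.

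For the epimorphism half, let $g_1,g_2\colon K^*(H)\to L$ be algebra maps with $g_1\eta=g_2\eta$, where $\eta\colon H\to K^*(H)$ is the unit, and put $D:=\eta(H)$, a subcoalgebra of $K^*(H)$ on which $g_1=g_2$. Applying the algebra map $g_i$ to the antipode identities, restricted to $x\in D$, shows that $g_i\circ S|_D$ is the two-sided convolution inverse of $g_i|_D$ in $\mathrm{Hom}(D,L)$; since two-sided inverses in a monoid are unique and $g_1|_D=g_2|_D$, we get $g_1\circ S|_D=g_2\circ S|_D$, i.e.\ $g_1=g_2$ on $S(D)$, and similarly on $S^{-1}(D)$ via the skew antipode. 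Iterating, $g_1=g_2$ on every $S^{n}(D)$, $n\in\mathbb{Z}$, hence on the subalgebra $A$ these generate. Since each $S^{n}(D)$ is a subcoalgebra and $S$, being here an anti-automorphism, permutes them, $A$ is a sub-bialgebra of $K^*(H)$ stable under $S$ and $S^{-1}$, i.e.\ an object of $\shalg$, containing $\eta(H)$. The universal property of $K^*(H)$ (the free object of $\shalg$ on $H$) now finishes the argument: $\eta$ corestricts to a Hopf algebra map $\eta^{A}\colon H\to A$, so there is a unique $\shalg$-morphism $\psi\colon K^*(H)\to A$ with $\psi\eta=\eta^{A}$; the composite $e$ of $\psi$ with the inclusion $A\hookrightarrow K^*(H)$ is then an $\shalg$-endomorphism of $K^*(H)$ with $e\eta=\eta$, hence $e=\mathrm{id}_{K^*(H)}$. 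Thus $A\hookrightarrow K^*(H)$ is split epi, and being mono it is an isomorphism, so $A=K^*(H)$ and $g_1=g_2$.

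For the monomorphism half, let $u,v\colon W\to K_*(H)$ be coalgebra maps with $\pi u=\pi v$, where $\pi\colon K_*(H)\to H$ is the counit. Using that $u$ is a coalgebra map, $S\circ u$ is the two-sided convolution inverse of $u$ in $\mathrm{Hom}(W,K_*(H))$, and likewise for $v$; applying the algebra map $\pi$ (post-composition is a monoid map for convolution) gives $\pi\circ S\circ u=(\pi u)^{-1}=(\pi v)^{-1}=\pi\circ S\circ v$. Iterating — and using the skew antipode for negative exponents — yields $\pi\circ S^{n}\circ u=\pi\circ S^{n}\circ v$ for all $n\in\mathbb{Z}$, so $u-v$ takes values in $N:=\bigcap_{n}\ker(\pi\circ S^{n})$. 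The crucial point is that $N$ is a \emph{Hopf ideal} of $K_*(H)$, with bijective induced antipode, contained in $\ker\pi$: each $\ker(\pi\circ S^{n})$ is a bi-ideal because a bi-ideal of an algebra (coalgebra) is unchanged on passing to the opposite (co-opposite), which absorbs the parity of $S^{n}$; and $N$ is visibly $S$- and $S^{-1}$-stable. By the universal property of $K_*(H)$ (the cofree object of $\shalg$ on $H$) every such ideal is zero: if $I\subseteq\ker\pi$ is a Hopf ideal with bijective induced antipode, then $\pi$ factors through $q\colon K_*(H)\twoheadrightarrow K_*(H)/I$ in $\shalg$, and the universal property produces an $\shalg$-morphism splitting $q$; so $q$ is split mono and epi, hence an isomorphism, and $I=0$. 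Therefore $N=0$, $u=v$, and $\pi$ is a monomorphism.

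The real content, and the main obstacle, is the transport step and its packaging: recognising that the (skew-)antipode identities are exactly what force two parallel maps that agree on the image of the unit to agree on all antipode-iterates of that image — dually, force the kernels to nest into $\bigcap_{n}\ker(\pi S^{n})$ — and then that the object so produced is closed under \emph{all} of the structure maps, so that the universal property can be applied through the elementary principle that a morphism which is simultaneously split epi and mono (dually, split mono and epi) is an isomorphism. On the monomorphism side one must take care to verify that $N$ is genuinely a Hopf ideal rather than a mere subspace, as this is precisely what makes the universal property bite; and in the $K^{*}$/$K_{*}$ cases one must remember to run the iteration over all of $\mathbb{Z}$, so that the generated object is $S^{-1}$-stable and therefore lies in $\shalg$.
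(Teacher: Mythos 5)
Your two epimorphism arguments are correct and essentially the paper's own: one shows that the iterates $S^n(\alpha(B))$, $n\ge 0$ (resp.\ $\bar S^n(\alpha(H))$, $n\in\mb Z$ or $n \ge 0$) generate the free object as an algebra, because the subalgebra they generate is a sub-bialgebra closed under the (inverse) antipode and hence, by the universal property, all of $H^*(B)$ (resp.\ $K^*(H)$); one then propagates the equality of two algebra maps along these iterates by uniqueness of two-sided convolution inverses. The paper, however, never argues the monomorphism statements directly: it proves a purely formal lemma (\leref{epi>mono}) that for a functor $U$ with left adjoint $F$ and right adjoint $G$, all units $d\to UF(d)$ are epi iff all counits $UG(d)\to d$ are mono — both conditions being equivalent, through the adjunction bijections, to injectivity of one and the same map $\mc D(UF(d),d')\to\mc D(d,UG(d'))$ — and then gets the two mono assertions for free from the two epi assertions. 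You instead run a ``formally dual'' direct argument for the counits, and that is where the trouble lies.

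The gap is in the claim that $N=\bigcap_{n\in\mb Z}\ker(\pi S^n)$ is a Hopf ideal. Each $\ker(\pi S^n)$ is indeed a bi-ideal (it is the kernel of a bialgebra map $K_*(H)\to H$ or $K_*(H)\to H^{op,cop}$, and bi-ideals are insensitive to op/cop), and $N$ is an ideal and is $S^{\pm1}$-stable. But coideals are \emph{not} closed under intersection: from $\Delta(x)\in I_1\otimes C+C\otimes I_1$ and $\Delta(x)\in I_2\otimes C+C\otimes I_2$ one cannot conclude $\Delta(x)\in(I_1\cap I_2)\otimes C+C\otimes(I_1\cap I_2)$ — this is dual to the fact that a sum of subalgebras need not be a subalgebra, and is the same reason the projections of a product of coalgebras are not jointly injective. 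So the step you yourself single out as ``precisely what makes the universal property bite'' is unproved, and the obvious repairs fail (the largest coideal contained in $N$ is a Hopf ideal, but there is no reason for it to contain the image of $u-v$). Everything downstream of that claim — the quotient by $N$, the splitting, $N=0$ — is fine, but it rests on this unsupported assertion. The economical fix is to delete the direct monomorphism halves altogether and deduce them from the epimorphism halves via the adjunction lemma, as the paper does.
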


For the proofs we require a category-theoretic lemma, which we state after some notations.

Let $\mc C,\mc D$ be two categories, and $U:\mc C\to\mc D$ a functor with a left adjoint $F$ and a right adjoint $G$. Denote by $\alpha:I_{\mc D}\to UF$ and $\beta:UG\to I_{\mc D}$ the unit of the adjunction $(F,U)$ and the counit of the adjunction $(U,G)$, respectively. We then have:

\begin{lemma}\lelabel{epi>mono}

With the notations above, $\alpha_d:d\to UF(d)$ is an epimorphism for every object $d\in\mc D$ iff $\beta_d:UG(d)\to d$ is a monomorphism for every object $d\in\mc D$.  

\end{lemma}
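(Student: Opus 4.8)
The plan is to exploit the fact that the two conditions are really statements about the same natural transformations, seen through the adjunction isomorphisms. Concretely, fix an object $d\in\mc D$. To test whether $\alpha_d:d\to UF(d)$ is an epimorphism one takes an object $c\in\mc C$ and two maps $UF(d)\to U(c)$ that agree after precomposition with $\alpha_d$; by the universal property of $(F,U)$, maps $UF(d)\to U(c)$ correspond bijectively (naturally in $c$) to maps $F(d)\to c$ in $\mc C$, and under this bijection precomposition with $\alpha_d$ becomes $U(-)$ followed by nothing, i.e. it recovers the adjunction bijection $\mc C(F(d),c)\cong\mc D(d,U(c))$. Dually, to test whether $\beta_d:UG(d)\to d$ is a monomorphism one takes $c\in\mc C$ and two maps $U(c)\to UG(d)$ agreeing after postcomposition with $\beta_d$; by $(U,G)$ these correspond to maps $c\to G(d)$, and postcomposition with $\beta_d$ recovers the bijection $\mc C(c,G(d))\cong\mc D(U(c),d)$.

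So the first step is to rewrite both conditions purely in terms of the forgetful functor $U$: $\alpha_d$ is epi for all $d$ iff for every $c\in\mc C$ the map $\mc C(F(d),c)\to\mc D(d,U(c))$ is injective \emph{and} moreover any two maps $UF(d)\to U(c)$ that become equal after $\alpha_d$ are already equal — but chasing the adjunction identities shows the cleanest formulation is: $\alpha_d$ is epi for all $d$ iff $U$ is faithful. Indeed, faithfulness of $U$ is exactly the statement that the unit of $(F,U)$ is a pointwise epimorphism (a standard fact: $U$ faithful $\iff$ counit of $(F,U)$ is pointwise epi in $\mc C$ — here I must be careful about which adjunction gives which — the correct statement is that $U$ is faithful iff the \emph{unit} $\alpha$ of $(F,U)$ is a pointwise epi, since $Uf = Ug$ with $f,g:d\to U(c)$ translates via $(F,U)$ to two maps $F(d)\to c$, whose equality is what faithfulness controls after transporting back). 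Dually, $U$ is faithful iff the \emph{counit} $\beta$ of $(U,G)$ is a pointwise monomorphism. Thus both sides of the claimed equivalence are equivalent to the single condition ``$U$ is faithful,'' which closes the argument.

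The key steps, in order, are: (i) prove the lemma ``$U$ faithful $\iff$ $\alpha_d$ epi for all $d$'' by the hom-set chase above; (ii) prove the dual lemma ``$U$ faithful $\iff$ $\beta_d$ mono for all $d$'' by the mirror-image chase; (iii) combine (i) and (ii). Each of the two sublemmas is a short diagram chase with the triangle identities of an adjunction, so there is no serious calculation.

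The main obstacle I anticipate is purely bookkeeping: keeping straight which of $\alpha$ and $\beta$ goes with which adjunction, and in particular verifying the direction of the implication ``$\alpha_d$ epi for all $d$ $\Rightarrow$ $U$ faithful'' — this uses that $F$ is defined on \emph{all} of $\mc D$, so one can feed the epi condition at the object $d = U(c)$ and apply it to the pair $(\mathrm{id}, \text{something})$, or more precisely use $\alpha_{U(c)}$ together with the counit of $(F,U)$ at $c$ to factor an arbitrary pair of maps out of $U(c)$. Getting this particular factorization right is the one spot where care is needed; everything else is formal.
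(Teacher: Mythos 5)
The reduction to faithfulness of $U$ is where your argument breaks down: neither of the two sublemmas is true. The standard fact you are reaching for says that $U$ is faithful iff the \emph{counit} of $(F,U)$, whose components $FU(c)\to c$ live in $\mc C$, is a pointwise epimorphism (dually, iff the \emph{unit} of $(U,G)$, with components $c\to GU(c)$ in $\mc C$, is a pointwise monomorphism). It says nothing about the unit $\alpha:I_{\mc D}\to UF$, whose components live in $\mc D$, and the claimed equivalence ``$U$ faithful $\iff$ $\alpha_d$ epi for all $d$'' is false. For a counterexample in which both adjoints exist, take $U=\Delta:\mathbf{Set}\to\mathbf{Set}\times\mathbf{Set}$ the diagonal functor, with $F=\sqcup$ and $G=\times$: here $\Delta$ is faithful, but $\alpha_{(X,Y)}:(X,Y)\to(X\sqcup Y,X\sqcup Y)$ is not an epimorphism and $\beta_{(X,Y)}:(X\times Y,X\times Y)\to(X,Y)$ is not a monomorphism (note the lemma itself is still verified: both conditions fail together). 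In the situation the lemma is actually applied to, $U$ is the inclusion $\halg\to\bialg$ or $\shalg\to\halg$, which is certainly faithful; if your reduction were correct, \thref{adjepimono} would be an immediate triviality, whereas it requires a genuine generation argument. A secondary slip: to test that $\alpha_d$ is an epimorphism in $\mc D$ you must test against arbitrary targets $d'\in\mc D$, not only those of the form $U(c)$, and $\mc D(UF(d),U(c))$ is not in bijection with $\mc C(F(d),c)$ unless $U$ is fully faithful.

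The repair is to use the two adjunctions simultaneously rather than one at a time. For each pair $d,d'\in\mc D$ the composite bijection $\mc D(UF(d),d')\cong\mc C(F(d),G(d'))\cong\mc D(d,UG(d'))$ identifies precomposition by $\alpha_d$, viewed as a map $\mc D(UF(d),d')\to\mc D(d,d')$, with postcomposition by $\beta_{d'}$, viewed as a map $\mc D(d,UG(d'))\to\mc D(d,d')$. Injectivity of the former for all $d'$ is exactly ``$\alpha_d$ is an epimorphism,'' injectivity of the latter for all $d$ is exactly ``$\beta_{d'}$ is a monomorphism,'' and since the identification is through bijections the two statements, quantified over all pairs $(d,d')$, coincide. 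Your opening paragraph gestures at precisely this hom-set translation before veering into the faithfulness reduction; carried out as above, it completes the proof.
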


\begin{proof}

For each pair of objects $d,d'\in\mc D$, we have a commutative diagram

\[
\begin{diagram}
\mc D(UF(d),d')       &             &           \\
\dTo                  &\rdTo        &           \\
\mc C(F(d),G(d'))     &             &\mc D(d,d')\\
\dTo                  &\ruTo        &           \\
\mc D(d,UG(d'))       &             &   
\end{diagram}
\]
where the two vertical arrows are the bijections given by the two adjunctions, and the two diagonal arrows are induced by $\alpha_d$ (the upper arrow) and $\beta_d$ (the lower arrow). 

The fact that $\alpha_d$ is an epimorphism for all $d$ is equivalent to the upper diagonal arrow being an injection for all pairs $d,d'$. Similarly for $\beta_d$ and the lower diagonal arrow. But since the vertical maps are bijections, the conditions that the upper and respectively lower diagonal arrow be an injection for all pairs $d,d'$ are equivalent. 
\end{proof}

\renewcommand{\proofname}{Proof of \thref{adjepimono}}
\begin{proof}

By applying \leref{epi>mono} to the two situations depicted in (a) and (b) (with the functor $U$ being the inclusion $j$ and $i$ respectively), we conclude that it suffices to prove one of the two statements in each of (a) and (b). It is enough, for instance, to show that the units of the two adjunctions $(H^*,j)$ and $(K^*,i)$ are epimorphisms. 

(a) We want to show that $\alpha:B\to H^*(B)$ is an epimorphism in $\bialg$ (strictly speaking, it should be $jH^*(B)$). Let $S$ be the antipode of $H^*(B)$. The subalgebra $H$ of $H^*(B)$ generated by $S^n(\alpha(B)),\ n\ge 0$ is a Hopf subalgebra: it is an algebra by definition, it is closed under $S$ again by definition, and it's a subcoalgebra because all the $S^n(\alpha(B))$ are. This means that $B\to H$ is a subobject of the initial object $B\to H^*(B)$ in the comma category $B\downarrow\halg$ (\cite[II$\S$6]{MacL}), and hence that $H=H^*(B)$.

Now consider a morphism of bialgebras $f:H^*(B)\to B'$. Then $fS\alpha$ is the inverse of $f\alpha$ in $\ho(B,B')$ under convolution, $fS^2\alpha$ is the inverse of $fS\alpha$ in $\ho(B^{cop},B')$, and so on. Because, as we have just seen, $H^*(B)$ is generated as an algebra by the iterations of $\alpha(B)$ under $S$, $f$ is uniquely determined by $f\alpha$. This is precisely the condition required in order that $\alpha$ be an epimorphism of bialgebras.  

(b) The proof runs parallel to that from (a): instead of the antipode, we now use the inverse $\bar S$ of the antipode $S$ of $K^*(H)$. Again, let $K$ be the subalgebra of $K^*(H)$ generated by $\bar S^n(\alpha(H)),\ n\ge 0$. Arguing as before, we conclude that $K=K^*(H)$, i.e. that $K^*(H)$ is generated as an algebra by the images of $\alpha(H)$ through the iterations of $\bar S$, and hence that a Hopf algebra map $f:K^*(H)\to H'$ is uniquely determined by $f\alpha:H\to H'$. 
\end{proof}
\renewcommand{\proofname}{Proof}

As a consequence, we have:

\begin{corollary}\colabel{counterex}

(a) If $B$ is a sub-bialgebra of a Hopf algebra such that $B$ itself is not Hopf, then $B\to H^*(B)$ is an injective, non-surjective epimorphism of bialgebras. Similarly, if the bialgebra $B$ is not Hopf but is a quotient of a Hopf algebra, then $H_*(B)\to B$ is a surjective, non-injective monomorphism of bialgebras. 

(b) If $H$ does not have bijective antipode but is contained in a Hopf algebra with bijective antipode, then $H\to K^*(H)$ is an injective, non-surjective epimorphism of Hopf algebras. Similarly, if $H$ does not have bijective antipode but is a quotient of a Hopf algebra with bijective antipode, then $K_*(H)\to H$ is a non-injective, surjective monomorphism. 

\end{corollary}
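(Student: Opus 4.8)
The plan is to deduce all four assertions from \thref{adjepimono} together with the universal properties of the four adjunctions in \thref{exadj}; the arguments are parallel, so I would spell out the first statement of (a) carefully and then indicate the substitutions that give the remaining three.

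First, \thref{adjepimono} already asserts that $B\to H^*(B)$ and $H\to K^*(H)$ are epimorphisms (in $\bialg$ and $\halg$ respectively) and that $H_*(B)\to B$ and $K_*(H)\to H$ are monomorphisms, so in each case only the (in)jectivity claims remain. For injectivity of the unit $\alpha_B\colon B\to H^*(B)$, I would use the hypothesis to produce an \emph{injective} bialgebra map $g\colon B\to j(T)$ with $T$ a Hopf algebra; the universal property of the unit $\alpha_B$ of the adjunction $(H^*,j)$ then factors $g$ as $j(\widetilde g)\circ\alpha_B$, and injectivity of $g$ forces injectivity of $\alpha_B$. Dually, for surjectivity of the counit $\beta_B\colon H_*(B)\to B$, a \emph{surjective} bialgebra map $\pi\colon j(T)\to B$ with $T$ a Hopf algebra factors through the counit as $\beta_B\circ j(\widetilde\pi)$, so $\beta_B$ is onto. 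The two statements in (b) come out verbatim, reading ``object of $\shalg$'' for ``Hopf algebra'', $\halg$ for $\bialg$, and $i$ for $j$, and using that $K^*(H),K_*(H)\in\shalg$ by \thref{exadj}.

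To finish each case, I would rule out the excluded conclusion: a map that is both injective and surjective is a bijection, and a bijective morphism in $\bialg$, $\halg$, or $\shalg$ is an isomorphism there, since its set-theoretic inverse automatically respects unit, multiplication, counit, comultiplication and, where present, the antipode and its inverse. Hence a bijective $B\to H^*(B)$ or $H_*(B)\to B$ would transport the Hopf-algebra structure of $H^*(B)$ (resp. $H_*(B)$) back onto $B$, contradicting that $B$ is not Hopf; and a bijective $H\to K^*(H)$ or $K_*(H)\to H$ would equip $H$ with a bijective antipode, again contrary to hypothesis. This yields the asserted non-surjectivity (resp. non-injectivity) and completes all four cases.

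I do not expect a genuine obstacle here: granting \thref{adjepimono}, the corollary is formal, the only points to watch being the correct bookkeeping of the universal properties of unit and counit in the middle step and the elementary observation that bijections are isomorphisms in these categories. That the hypotheses are not vacuous is a separate matter; for the first assertion of (b), for instance, one may take $H$ to be Takeuchi's free Hopf algebra $H(M_n(k)^*)$ with $n>1$, which has injective, non-bijective antipode by \cite{Ni} and is therefore a Hopf subalgebra of one with bijective antipode by \cite[Prop. 2.7]{Sc}.
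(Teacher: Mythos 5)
Your proposal is correct and follows essentially the same route as the paper: the epi/mono claims are quoted from \thref{adjepimono}, injectivity (resp.\ surjectivity) of the unit (resp.\ counit) is extracted from the factorization of the given injection into (resp.\ surjection from) a Hopf algebra through the universal map, and the remaining negative claim follows because a bijective unit or counit would make $B$ (resp.\ $H$) an object of the smaller category, contradicting the hypothesis. The paper merely compresses the last step into ``the rest follows immediately,'' which you have usefully spelled out.
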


\begin{proof}

(a) Since the inclusion of $B$ in a Hopf algebra factors through $B\to H^*(B)$, the latter must be an injective map. The rest follows immediately from \thref{adjepimono}. For the second statement the dual argument works. 

(b) is entirely analogous to (a). 
\end{proof}

Examples as in the previous corollary actually exist. Focusing on (b), the Hopf algebra case, such examples can be found in \cite{Sc}: {\it any} Hopf algebra $H$ with injective non-bijective antipode (such as the free Hopf algebra on the coalgebra $M_n(k)^*,\ n\ge 2$, according to \cite{Ni}) injects properly into $K^*(H)$, and also, an example is given of a Hopf algebra with non-bijective antipode which is a quotient of a Hopf algebra with bijective antipode: it is a quotient of the free Hopf algebra with bijective antipode on the coalgebra $M_4(k)^*$. In conclusion, we have:

\begin{corollary}\colabel{main}

There is an epimorphic inclusion $H\to K$ of Hopf algebras with $K$ having a bijective antipode. Similarly, there is a monomorphic surjection $H\to K$ of Hopf algebras with $H$ having a bijective antipode. 

\end{corollary}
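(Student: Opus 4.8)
The plan is to feed the known pathological examples of antipodes into \coref{counterex}; no new ideas are needed. For the first assertion, I would take $H$ to be Takeuchi's free Hopf algebra $H(M_n(k)^*)$ on the coalgebra $M_n(k)^*$ with $n\ge 2$. By Nichols's analysis in \cite{Ni}, the antipode of this $H$ is injective but not surjective, in particular not bijective. On the other hand, a Hopf algebra whose antipode is injective is a Hopf subalgebra of one with bijective antipode (an immediate consequence of \cite[Prop. 2.7]{Sc}); concretely, $H$ embeds into $K^*(H)$, and $K^*(H)$ lies in $\shalg$. Hence the hypotheses of the first half of \coref{counterex}(b) are met, so the unit $H\to K^*(H)$ is an injective, non-surjective epimorphism of Hopf algebras whose target has bijective antipode. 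Taking $K:=K^*(H)$, this is the desired epimorphic inclusion.

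For the second assertion I would argue dually. Here I need a Hopf algebra with non-bijective antipode that is a quotient of some Hopf algebra with bijective antipode, and Schauenburg produces exactly such a Hopf algebra $L$ in \cite{Sc}, realised as a quotient of the free Hopf algebra with bijective antipode on the coalgebra $M_4(k)^*$. Feeding $L$ into the second half of \coref{counterex}(b), the counit $K_*(L)\to L$ is a surjective, non-injective monomorphism of Hopf algebras, and its source $K_*(L)$ lies in $\shalg$, hence has bijective antipode. Writing $H:=K_*(L)$ and $K:=L$, this is the required monomorphic surjection with $H$ having bijective antipode.

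There is essentially no obstacle internal to this corollary: the substantive work — the existence of the adjoints $K^*,K_*$ (\thref{exadj}), the fact that the relevant units and counits are epimorphisms and monomorphisms (\thref{adjepimono}), and the reduction carried out in \coref{counterex} — is already in place, and the pathology of the two antipodes in question is imported from \cite{Ni} and \cite{Sc}. The only point deserving a moment's care is checking that the two examples really satisfy the two (dual) hypotheses of \coref{counterex}(b): injectivity of the antipode of $H(M_n(k)^*)$ is precisely what provides the embedding into $\shalg$, while Schauenburg's quotient is built so as to carry a non-bijective antipode despite being a quotient of an object of $\shalg$.
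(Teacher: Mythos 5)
Your proposal is correct and is essentially the argument the paper itself gives: it feeds Takeuchi's free Hopf algebra $H(M_n(k)^*)$, $n\ge 2$ (injective, non-surjective antipode by \cite{Ni}, hence embeddable into $K^*(H)\in\shalg$ via \cite[Prop. 2.7]{Sc}) into the first half of \coref{counterex}(b), and Schauenburg's quotient of the free Hopf algebra with bijective antipode on $M_4(k)^*$ into the second half. No discrepancies to report.
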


Next, we give explicit constructions for the right adjoints to the inclusions $j:\halg\to\bialg$ and $i:\shalg\to\halg$. In particular, this solves \cite[Problem 2]{Ag1}, which asks for a construction for the right adjoint to $j$, shown there to exist by the Special Adjoint Functor Theorem (the dual of \cite[V$\S$8, Corollary]{MacL}). 

Throughout, we shall make free use of the fact that the following categories are all complete and cocomplete: $\alg,\coalg,\bialg,\halg$. In fact, they are locally presentable, and locally presentable categories are cocomplete (by definition: \cite[Def. 1.17]{ARo}) and complete (\cite[Remark 1.56]{ARo}). 

The local presentability is proven up to bialgebras in \cite{Po1} in the more general setting of monoids, comonoids and bimonoids in a symmetric monoidal category with some extra assumptions, which are all satisfied by the category of $k$-vector spaces (see Summary 4.3 in that paper); that $\halg$ is locally presentable follows from \cite[Prop. 4.3]{Po2} and the fact that by \cite{Ta1}, the forgetful functor $\halg\to\coalg$ has a left adjoint (this is the argument used in the proof of \cite[Theorem 2.6]{Ag1}). Alternatively, one could prove the local presentability of these categories directly, but we do not go into these details here.

We start the construction of adjoints with the inclusion $j:\halg\to\bialg$.

\renewcommand{\proofname}{Proof of \thref{exadj} (a)}
\begin{proof}

As mentioned before, we only construct the right adjoints, since explicit constructions for the left adjoints can be found in the literature, in the sources cited above.

We simply dualize the construction from \cite[Theorem 2.6.3]{Pa}. As that proof is very detailed, and most arguments here are simply dualizations of those, we will only indicate how the construction goes, leaving out simple verifications.   

Let $B$ be a bialgebra, and let $P$ be the product (in the category $\bialg$) of the bialgebras $B_n,\ n\ge 0$, where $B_n=B$ if $n$ is even, and $B_n=B^{op,cop}$ if $n$ is odd. Denote by $\pi_n$ the structure maps $P\to B_n$ of the product of bialgebras, and let $\eta, \ve$ be the unit and counit of $P$ respectively. By the universality of the product, there is a unique bialgebra map $S$ such that 
\[
\begin{diagram}
P^{op,cop}                &\rTo^S          &P   \\
\dTo<{\pi_{n+1}}          &                &\dTo>{\pi_n} \\
B_{n+1}^{op,cop}          &\rTo^{{\rm id}} &B_n  
\end{diagram}
\] 
commutes for all $n\ge 0$. 

Let $H_*(B)$ be the sum of all subcoalgebras $C\subseteq P$ on which $S$ behaves like an antipode. Specifically, the condition such a coalgebra $C$ is supposed to satisfy is
\begin{equation}\eqlabel{antipode}
c_{(1)}S(c_{(2)})=\eta\ve(c)=S(c_{(1)})c_{(2)},\ \forall c\in C.
\end{equation}
As the notation suggests, this is the object we are looking for. $H_*(B)$ is by definition a subcoalgebra of $P$, and \equref{antipode} holds with $H_*(B)$ instead of $C$. In other words, $H_*(B)$ is the largest subcoalgebra of $P$ on which $S$ acts as an antipode. It is an easy matter now to prove that $H_*(B)$ is closed under multiplication and the action of $S$ (and it clearly contains the unit $1_P$), so it is, in fact, a Hopf subalgebra of $P$ with antipode $S$.   

We now want to prove that $\beta:H_*(B)\to B$, the composition of $\pi_0:P\to B$ with the inclusion $H_*(B)\to P$, is universal from a Hopf algebra to $B$. So let $f:H\to B$ be a bialgebra map from a Hopf algebra $H$ to $B$. The maps 
\[
f_n=f\circ S_H^n:H\to B_n,\ n\ge 0
\]
are bialgebra morphisms, and so define a bialgebra map $\tilde f:H\to P$ with $\pi_0\circ \tilde f=f$. First, we want to show that $\tilde f$ intertwines $S$ and $S_H$:
\[
\begin{diagram}
H                  &\rTo^{S_H}        &H              \\
\dTo<{\tilde f}    &                  &\dTo>{\tilde f}\\
P                  &\rTo^S            &P
\end{diagram}
\]
In turn, this follows from the universality of the product $P$ if we show that
\begin{equation}\eqlabel{0}
\pi_n\circ\tilde f\circ S_H=\pi_n\circ S\circ\tilde f,\ \forall n\ge 0
\end{equation}
as maps from $H$ to $B$. On the one hand, from the definition of $\tilde f$, we get
\begin{equation}\eqlabel{1}
\pi_n\circ\tilde f\circ S_H=f_n\circ S_H=f\circ S_H^{n+1}=\pi_{n+1}\circ\tilde f,
\end{equation}
and on the other hand, from the definition of $S$, we have 
\begin{equation}\eqlabel{2}
\pi_n\circ S\circ\tilde f=\pi_{n+1}\circ\tilde f,
\end{equation}
because $\pi_n\circ S=\pi_{n+1}$ as maps from $P$ to $B$ (we identify the underlying sets of all $B_n$). \equref{1} and \equref{2} now prove the desired equality \equref{0}. 

Because $\tilde f$ intertwines $S,S_H$ and $S_H$ is the antipode of $H$, it follows that $S$ is the antipode of $\tilde f(H)$. The definition of $H_*(B)$ now implies that the image of $\tilde f$ is contained in $H_*(B)$, i.e. $\tilde f$ factors through $H_*(B)\subseteq P$. In other words, we have just shown that any bialgebra map $f:H\to B$ factors as
\[
\begin{diagram}
H           &\rTo^{\tilde f}         &H_*(B)      \\
            &\rdTo>f                 &\dTo>\beta  \\
            &                        &B           
\end{diagram}
\]
It remains to prove that in such a diagram, $\tilde f$ is unique. Again, $\tilde f$ is determined by the sequence of maps $\pi_n\tilde f$ (also regarding $\pi_n$ as a map from $H_*(B)\subseteq P$ to $B_n$). But notice that, because $\tilde f$ commutes with the antipodes, we have
\[
\pi_n\circ\tilde f\circ S_H=\pi_n\circ S\circ\tilde f=\pi_{n+1}\circ \tilde f. 
\] 
This means that $\pi_{n+1}\tilde f$ is the inverse of $\pi_n\tilde f$ in $\ho(H,B_n)$ under convolution, and hence that the sequence $\pi_n\tilde f$ is uniquely determined by $\pi_0\tilde f=f$. This finishes the proof. 
\end{proof}
\renewcommand{\proofname}{Proof}

We now want to obtain the right adjoint to the inclusion $i:\shalg\to\halg$ as a direct consequence of \thref{exadj} (a) above. For this, we need

\begin{lemma}\lelabel{skant}

Let $B$ be a bialgebra with a skew antipode $\bar S_B$. Then, the cofree Hopf algebra $H_*(B)$ constructed above also has a skew antipode $\bar S$. Consequently, the antipode $S$ of $H_*(B)$ is bijective.   

\end{lemma}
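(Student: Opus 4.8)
The plan is to imitate the construction of the map $S$ in the proof of \thref{exadj}~(a): I would build a candidate skew antipode $\bar S$ on the bialgebra product $P$ and then show that it restricts to one on $H_*(B)$.

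The key preliminary remark is that $\bar S_B$ reverses both the multiplication and the comultiplication of $B$, and is therefore a bialgebra map $B^{op,cop}\to B$. Using this and the universal property of $P=\prod_{n\ge 0}B_n$, I would define a bialgebra map $\bar S\colon P^{op,cop}\to P$ by prescribing its components: $\pi_n\circ\bar S=\pi_{n-1}$ for $n\ge 1$ (legitimate exactly as for $S$, since $B_{n-1}^{op,cop}=B_n$) and $\pi_0\circ\bar S=\bar S_B\circ\pi_0$ (legitimate by the remark; this ``wrap-around'' at index $0$ is forced, since there is no factor $B_{-1}$). Since a bialgebra map into $P$ is determined by its composites with the $\pi_n$, one reads off at once that $S\circ\bar S=\mathrm{id}_P$, because $\pi_n\circ S\circ\bar S=\pi_{n+1}\circ\bar S=\pi_n$ for all $n$. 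One does \emph{not} get $\bar S\circ S=\mathrm{id}_P$; this is where the subcoalgebra $H_*(B)$ must come in.

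Next I would record that the projections $\pi_n\colon P\to B_n$ are jointly injective: their common kernel is a sum of finite-dimensional subcoalgebras of $P$, hence a subcoalgebra on which each coalgebra map $\pi_n$ vanishes, and a coalgebra annihilated by its counit is $0$. Thus the two skew-antipode identities for $\bar S$ on $H_*(B)$ may be checked after applying each $\pi_n$. From the proof of \thref{exadj}~(a) we know that, on $H_*(B)$, the map $\pi_n$ is the convolution inverse of $\pi_{n-1}$ in $\ho(H_*(B),B_{n-1})$, and $\pi_0=\beta\colon H_*(B)\to B$ is a bialgebra map. For $c\in H_*(B)$ and $n\ge 1$, $\pi_n\bigl(\bar S(c_{(2)})c_{(1)}\bigr)=\sum\pi_{n-1}(c_{(2)})\,\pi_n(c_{(1)})$ computed in $B_n$; since the multiplication of $B_n$ is opposite to that of $B_{n-1}$, this equals $\sum\pi_n(c_{(1)})\,\pi_{n-1}(c_{(2)})$ in $B_{n-1}$, which is $\eta\ve(c)$ by mutual convolution-inversion of $\pi_n$ and $\pi_{n-1}$. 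For $n=0$, applying $\pi_0$ and using that $\beta$ is a coalgebra map reduces the identity to the skew-antipode axiom for $\bar S_B$ in $B$. The mirror computation handles $c_{(2)}\bar S(c_{(1)})$. So $\bar S(c_{(2)})c_{(1)}=\eta\ve(c)=c_{(2)}\bar S(c_{(1)})$ holds in $P$ for every $c\in H_*(B)$.

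It remains to see that $\bar S$ preserves $H_*(B)$. Now $\bar S(H_*(B))$ is a subcoalgebra of $P$ (image of a subcoalgebra under the coalgebra anti-homomorphism $\bar S$), and for $d=\bar S(c)$ with $c\in H_*(B)$, the relation $S\bar S=\mathrm{id}_P$ together with the identities just proved give $d_{(1)}S(d_{(2)})=\bar S(c_{(2)})c_{(1)}=\eta\ve(d)$ and, symmetrically, $S(d_{(1)})d_{(2)}=\eta\ve(d)$; hence $S$ acts as an antipode on $\bar S(H_*(B))$, so $\bar S(H_*(B))\subseteq H_*(B)$ by the maximality defining $H_*(B)$. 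Therefore $\bar S$ restricts to a bialgebra map $H_*(B)^{op,cop}\to H_*(B)$ satisfying the skew-antipode axioms, i.e.\ a skew antipode of $H_*(B)$; and since a Hopf algebra has a skew antipode precisely when its antipode is bijective (the two being then mutually inverse), the antipode $S$ of $H_*(B)$ is bijective. The delicate point is the sequencing: the skew-antipode identities on $H_*(B)$ have to be established first, componentwise, and only then does maximality deliver that $\bar S$ stabilizes $H_*(B)$ — so the joint injectivity of the $\pi_n$, which underwrites the componentwise verification, is really the crux.
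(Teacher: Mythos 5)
Your construction of $\bar S$ on the product $P$, the identity $S\bar S=\mathrm{id}_P$, and the final maximality step are all sound, and you have correctly located the crux of your argument --- but the crux is exactly where it fails. The projections $\pi_n\colon P\to B_n$ of a product of bialgebras (equivalently, of the underlying product of coalgebras, since the forgetful functor $\bialg\to\coalg$ has a left adjoint and hence preserves products) are \emph{not} jointly injective in general: they are jointly monic in the categorical sense, but monomorphisms of coalgebras need not be injective, and neither need jointly monic families be jointly injective as linear maps. Your supporting argument does not hold up either: $\bigcap_n\ker\pi_n$ is an intersection of coideals, not a subcoalgebra, so the fundamental theorem of coalgebras does not apply to it and you cannot write it as a sum of finite-dimensional subcoalgebras. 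For a concrete failure, take a finite-dimensional coalgebra $C=A^*$; the product $C\times C$ in $\coalg$ is the finite dual $(A*A)^{\circ}$ of the free product of algebras, and a functional vanishing on both copies of $A$ but not on products of their elements (e.g.\ the functional dual to $xy$ for $A=k[x]/(x^2)$) lies in the common kernel of the two projections. Consequently, verifying the skew-antipode identities after applying each $\pi_n$ does not establish them in $P$, and the middle of your proof is unsupported.

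The paper avoids this issue by never leaving $H_*(B)$: since $H_*(B)^{op,cop}$ is a Hopf algebra and $\bar S_B\circ\pi_0\colon H_*(B)^{op,cop}\to B$ is a bialgebra map, the universal property of $\beta=\pi_0$ produces $\bar S$ directly as a Hopf algebra map $H_*(B)^{op,cop}\to H_*(B)$ (so stability of $H_*(B)$ under $\bar S$ is automatic), and the \emph{uniqueness} clause of that universal property does the work your joint-injectivity claim was meant to do: $\bar S S$ and $\mathrm{id}_{H_*(B)}$ are both lifts through $\pi_0$ of the same map once one checks $\pi_0=\bar S_B\pi_1$, which is precisely the kind of convolution-inverse computation you perform componentwise; $S\bar S=\mathrm{id}$ then follows by a further convolution argument. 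To repair your version you would have to replace ``jointly injective'' by an appeal to uniqueness of factorizations through $\pi_0$ (or through the product), which essentially reduces to the paper's argument.
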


\begin{proof}

The last statement follows immediately from the first, as it is well-known that a Hopf algebra has a skew antipode iff its antipode is bijective, in which case the skew antipode is the inverse of the antipode (\cite[Lemma 1.5.11]{Mo}). We focus on showing that the antipode $S$ of $H_*(B)$ is bijective. 

We use the notations from the proof of \thref{exadj} (a). Recall that there are maps $\pi_n$ from $H_*(B)$ to $B_n,\ n\ge 0$, where $B_n$ is $B$ for even $n$ and $B^{op,cop}$ for odd $n$. $\pi_0$ is universal, and the maps $\pi$ satisfy
\begin{equation}\eqlabel{skant}
\pi_nS=\pi_{n+1},\ \forall n\ge 0. 
\end{equation}

From the universality of $\pi_0:H_*(B)\to B$, we can find a unique Hopf algebra map $\bar S$ making the following diagram of bialgebra morphisms commutative:
\[
\begin{diagram}
H_*(B)^{op,cop}               &\rTo^{\bar S}                &H_*(B)         \\
\dTo<{\pi_0}                  &                             &\dTo<{\pi_0}   \\
B^{op,cop}                    &\rTo^{\bar S_B}              &B
\end{diagram}
\]
The aim is to show that $\bar S$ is a composition inverse to $S$. Complete this diagram to the left with another square (commutative by \equref{skant} for $n=0$):
\[
\begin{diagram}
H_*(B)              &\rTo^S                   &H_*(B)^{op,cop}               &\rTo^{\bar S}                &H_*(B)         \\
\dTo<{\pi_1}        &                         &\dTo<{\pi_0}                  &                             &\dTo<{\pi_0}   \\
B^{op,cop}          &\rTo^{{\rm id}}          &B^{op,cop}                    &\rTo^{\bar S_B}              &B
\end{diagram}
\]

Again by the universality of $\pi_0$, the composition $\bar SS$ is the unique Hopf algebra map making the outer rectangle commutative. If we prove that the identity on $H_*(B)$ also makes the outer rectangle commutative, we will have shown that $\bar S$ is a left composition inverse for $S$. In other words, we now want to show that
\begin{equation}\eqlabel{inverses}
\pi_0=\bar S_B\pi_1. 
\end{equation}

Since $\bar S_B$ is an antipode for $B^{cop}$ and $\pi_1$ is in $\coalg(H_*(B),B^{cop})$, the composition $\bar S_B\pi_1$ is the convolution inverse of $\pi_1$ in $\ho(H_*(B),B)$ (or $\ho(H_*(B),B^{cop})$, the algebra structure under convolution is the same). On the other hand, \equref{skant} with $n=0$ shows that $\pi_0$ is also the convolution inverse of $\pi_1$ in $\ho(H_*(B),B)$. This implies the desired equality \equref{inverses}. 

We have just shown that $\bar SS=I_{H_*(B)}$. Deducing now that $S\bar S$ is also the identity is easy: $S=S\bar SS$ is the convolution inverse of both $I_{H_*(B)}$ and of $S\bar S$ in ${\rm End}(H_*(B))$. 
\end{proof}

We now have what we need to finish the proof of \thref{exadj}.

\renewcommand{\proofname}{Proof of \thref{exadj} (b)}
\begin{proof}

Let $H$ be a Hopf algebra. The bialgebra $B=H^{op}$ has a skew antipode, namely $S_H$. According to \leref{skant}, the antipode of the cofree Hopf algebra $H_*(B)$ on $B$ is bijective. The universal bialgebra map 
\begin{equation}\eqlabel{exadj (b)}
\beta:H_*(H^{op})\to H^{op}
\end{equation}
induces a bialgebra map denoted by the same symbol:
\[
\beta:(H_*(H^{op}))^{op}\to H. 
\]
I claim that this is universal from a Hopf algebra with bijective antipode to $H$. In other words, we have
\[
K_*(H)=(H_*(H^{op}))^{op},
\]
with the obvious universal map $\beta$ to $H$. 

To see this, let $f:K\to H$ be a Hopf algebra map, with $K$ having bijective antipode. $f$ is then also a bialgebra morphism from the Hopf algebra $K^{op}$ to $H^{op}$, and hence factors uniquely through $\beta$ by the universality of \equref{exadj (b)}. This gives a unique map $\tilde f$, say, from $K^{op}$ to $H_*(H^{op})$. $\tilde f$ will then also be the unique Hopf algebra map from $K$ to $(H_*(H^{op}))^{op}$ through which $f$ factors, and the proof is finished. 
\end{proof}
\renewcommand{\proofname}{Proof}

\begin{remark}\relabel{altadj}

Although we prefer the construction used above because it shows how \thref{exadj} (b) follows directly from (a), there is more than one way of introducing the right adjoint to $i:\shalg\to\halg$. 

One idea, for instance, would be to dualize Schauenburg's construction from \cite[Prop. 2.7]{Sc}: $K_*(H)$ is the limit of the inverse system of Hopf algebras $u_n:H_{n+1}\to H_n,\ n\ge 0$, where all $H_n$ are $H$, and all $f_n$ are equal to the square $S_H^2$ of the antipode $S_H$. 

Alternatively, we could imitate the construction appearing in \thref{exadj} (a), by using a product of bialgebras $B_n$ indexed by the integers instead of the natural numbers, with $B_n=B$ for $n$ even and $B_n=B^{op,cop}$ for odd $n$ (just as before). 

This observation works the other way around too: the {\it left} adjoint of the inclusion $i:\shalg\to\halg$, denoted by $K^*$, can be constructed in the same manner, using the left adjoint of $j:\halg\to\bialg$ from \thref{exadj} (a). Just as in the previous proof, we have 
\[
K^*(H)=(H^*(H^{op}))^{op}. 
\]

\end{remark}

\section{Some comments and problems}\selabel{4}

As remarked several times before, I do not know whether counterexamples as in \coref{main} still exist if we require that {\it both} Hopf algebras $H$ and $K$ have bijective antipode. 

In the spirit of the connections we have noticed above between faithful flatness/coflatness and the problem of category-theoretic conditions (epimorphisms, monomorphisms) vs. set-theoretic conditions (surjectivity, injectivity), we ask:

\begin{question}

Is an epimorphism of Hopf algebras with bijective antipode necessarily surjective? 

\end{question}

And its dual:

\begin{question}

Is a monomorphism of Hopf algebras with bijective antipode necessarily injective? 

\end{question}

These, we believe, should go hand in hand with the aforementioned Kaplansky conjecture and its dual, regarding faithful coflatness.

We now turn our attention to the adjunctions which appear in \seref{3}. It follows immediately from \thref{exadj} (a) that a bialgebra $B$ has a largest subbialgebra which is a quotient of a Hopf algebra (the image of $H_*(B)\to B$), and dually, has a largest quotient bialgebra contained in a Hopf algebra (the image of $B\to H^*(B)$). In an entirely analogous manner, \thref{exadj} (b) implies that a Hopf algebra $H$ has a largest Hopf subalgebra which is a quotient of one with bijective antipode (the image of $K_*(H)\to H$), and a largest quotient Hopf algebra contained in one with bijective antipode (the image of $H\to K^*(H)$). The natural problem arises of characterizing those bialgebras (Hopf algebras) which are quotients or subbialgebras (resp. quotients or Hopf subalgebras) of Hopf algebras (resp. Hopf algebras with bijective antipode). 

For one of the four adjunctions, at least, this question is settled: part of \cite[Prop. 2.7]{Sc} says, in a slightly different formulation, that a Hopf algebra is a Hopf subalgebra of one with bijective antipode iff it has injective antipode. This is a consequence of Schauenburg's construction of $K^*(H)$ as the colimit of the inductive system $u_n:H_n\to H_{n+1},\ n\ge 0$, with $H_n=H$ and $u_n=S_H^2$ for all $n\ge 0$ (see \reref{altadj}). The result just mentioned then follows from the fact that if in such a system all maps are injections, the map sending $H_0$ to the colimit is also an injection. 

As mentioned in \reref{altadj}, we can dualize this construction. The dual statement on inverse limits with surjective maps, however, no longer holds, in general. At least not at the level of coalgebras (and the limit appearing there is one of coalgebras, as the forgetful functor $\halg\to\coalg$ is a right adjoint by \cite{Ta1}, so it preserves limits): one can easily construct a sequence of surjections $C_{n+1}\to C_n$ where $C_n$ are simple coalgebras and $\dim C_n\to\infty$, in which case the resulting limit is none other than $0$. 

Despite such examples, can we still find simple necessary and sufficient conditions on a Hopf algebra in order that it be a quotient of a Hopf algebra with bijective antipode?

\begin{problem}

Characterize those Hopf algebras $H$ for which $K_*(H)\to H$ is surjective. 

\end{problem}

More specifically, we ask

\begin{question}

Is it true that a Hopf algebra with surjective antipode is a quotient of one with bijective antipode? 

\end{question}

And what can be said about the other two adjunctions, between the categories $\bialg$ and $\halg$? We would like to find necessary and sufficient conditions on a bialgebra, expressed intrinsically, in order that it be a subbialgebra or a quotient bialgebra of a Hopf algebra.

\begin{problem}

Characterize intrinsically those bialgebras $B$ for which (a) $B\to H^*(B)$ is injective, or (b) $H_*(B)\to B$ is surjective.   

\end{problem}

We take a moment here to point out that it is by no means true that all bialgebras satisfy (a) (or (b)). In other words, $B\to H^*(B)$ is not always injective, nor is $H_*(B)\to B$ always surjective. Some examples follow.

\begin{example}

Let $M$ be a monoid, and $B=k[M]$ the monoid bialgebra. One sees easily that the free Hopf algebra $H^*(B)$ on $B$ is precisely the group algebra of the enveloping group $G(M)$ of $M$. If the canonical map $M\to G(M)$ happens to be non-injective (and this happens whenever $M$ is not ``cancellable''), $B\to H^*(B)$ will be non-injective as well. This implies that $B$ is not a subbialgebra of a Hopf algebra. 

\end{example}

\begin{example}

Let $H$ be a Hopf algebra with non-injective antipode. It is then clear that $H\to K^*(H)$ cannot be an embedding. In view of \reref{altadj}, $K^*(H)$ is the opposite of $H^*(H^{op})$. Consequently, $B=H^{op}$ is not a subbialgebra of a Hopf algebra. 

\end{example}

\begin{example}

The previous example can be dualized, using \reref{altadj} again: if $H$ is a Hopf algebra with non-surjecive antipode, then $B=H^{op}$ is a bialgebra which is not a quotient of a Hopf algebra. 

\end{example}

\section*{Acknowledgements}

The author would like to thank Prof. Gigel Militaru for suggesting some of the questions posed here, as well as colleagues Ana-Loredana Agore and Drago\c s Fr\u a\c til\u a for countless fruitful conversations on the topics treated in this paper. Also, we thank the referee for valuable suggestions on the improvement of this paper. 



\end{document}